\documentclass[10pt]{amsart}
\setlength{\textwidth}{16.2cm}
\setlength{\textheight}{21.2cm}
\hoffset=-55pt
\usepackage{amsfonts}
\usepackage{array}
\usepackage{tabularx}
\usepackage{arydshln}
\usepackage{amsmath}

\usepackage{amssymb}
\usepackage{physics}
\usepackage{graphicx}%
\usepackage{caption}
\usepackage{subcaption}
\usepackage{multirow}
\usepackage{footmisc}
\setcounter{MaxMatrixCols}{30}

\providecommand{\U}[1]{\protect\rule{.1in}{.1in}}
\newtheorem{theorem}{Theorem}

\newtheorem{corollary}[theorem]{Corollary}

\newtheorem{definition}[theorem]{Definition}

\newtheorem{lemma}[theorem]{Lemma}

\newtheorem{proposition}[theorem]{Proposition}

\usepackage{xcolor}

\usepackage{latexsym}
\usepackage{amsmath}
\usepackage{amssymb}
\usepackage{mathrsfs}
\usepackage{graphicx}
\usepackage{color}
\usepackage{pgfpages}
\usepackage{ifthen}
\usepackage{leftidx,tensor}
\usepackage[T1]{fontenc}
\usepackage[latin1]{inputenc}
\usepackage{mathtools}
\usepackage{comment}
\usepackage{dsfont}
\usepackage[nocompress]{cite}

\usepackage[shortlabels]{enumitem}
\usepackage{aliascnt}
\usepackage[bookmarks=true,pdfstartview=FitH, pdfborder={0 0 0}, colorlinks=true,citecolor=red, linkcolor=blue]{hyperref}
\usepackage{nicefrac}


\newaliascnt{cor}{thm}
\newaliascnt{prop}{thm}
\newaliascnt{lem}{thm}
\aliascntresetthe{cor}
\aliascntresetthe{prop}
\aliascntresetthe{lem} 
%

%
\newaliascnt{defn}{thm}
\newaliascnt{asu}{thm}
\newaliascnt{con}{thm}
\aliascntresetthe{defn}
\aliascntresetthe{asu}
\aliascntresetthe{con}
%

\newcounter{stp}
\newcounter{stpi}
\newcounter{stpci}
\newcounter{stpiii}

 \setcounter{stp}{0}
 \setcounter{stpiii}{0}

%
\newaliascnt{rem}{thm}
\newaliascnt{exa}{thm}
\newaliascnt{masu}{thm}
\newaliascnt{nota}{thm}
\newaliascnt{sett}{thm}
\aliascntresetthe{rem}
\aliascntresetthe{exa}
\aliascntresetthe{masu}
\aliascntresetthe{nota}
\aliascntresetthe{sett}
%


%


\setcounter{tocdepth}{2}
\numberwithin{equation}{section}

\parindent=10pt
\labelindent=10pt

\setlist[enumerate]{font = \normalfont}




\newcommand {\R}	{\mathbb{R}}

\newcommand {\E}	{\mathbb{E}}


\renewcommand{\d}{\, \mathrm{d}}

\DeclareMathOperator{\divH}{div_{\H}}







\renewcommand{\H}{\mathrm{H}}

\newcommand{\per}{\mathrm{per}}

\newcommand{\sigmabar}{\bar{\sigma}}





	\newcommand{\dk}[1]{\partial_{#1}}
	\newcommand{\dt}{\dk{t}} 
	\newcommand{\dz}{\dk{z}} 
	



	\newcommand{\eps}{\varepsilon}
	\renewcommand{\phi}{\varphi}
	


	\renewcommand{\bar}[1]{\overline{#1}}
	\newcommand{\vbar}{\bar{v}}


	\renewcommand{\div}{\mathrm{div} \, }
	\newcommand{\nablaH}{\nabla_{\H}}
	\newcommand{\DeltaH}{\Delta_{\H}}


	\newcommand{\rC}{\mathrm{C}}
	\newcommand{\rL}{\mathrm{L}}
	\newcommand{\rW}{\mathrm{W}}
	\newcommand{\rH}{\H}
	\newcommand{\rB}{\mathrm{B}}

	\newcommand{\rLq}{\rL^q}




















	



	\newcommand{\rX}{\mathrm X}


\title[Time-Periodic Solutions to an Energy Balance Model for Arbitrarily large forces]{Time-periodic solutions to an energy balance  model coupled with an active fluid under arbitrary large forces}

\author{Gianmarco Del Sarto}
\address{Technische Universit\"{a}t Darmstadt\\
Fachbereich Mathematik\\
	Schlossgartenstr.\ 7\\
	64289 Darmstadt\\
	Germany}
\email{delsarto@mathematik.tu-darmstadt.de}

\author{Matthias Hieber}
\address{Technische Universit\"at Darmstadt\\
	Fachbereich Mathematik\\
	Schlossgartenstr.\ 7\\
	64289 Darmstadt\\
	Germany}
\email{hieber@mathematik.tu-darmstadt.de}

\author{Filippo Palma}
\address{Universit\`a degli Studi della Campania L. Vanvitelli\\
	Dipartimento di Matematica e Fisica\\
	Via Vivaldi 43\\
	81100 Caserta\\
	Italy}
\email{filippo.palma@unicampania.it}

\author{Tarek Z\"{o}chling}
\address{Technische Universit\"at Darmstadt\\
	Fachbereich Mathematik\\
	Schlossgartenstr.\ 7\\
	64289 Darmstadt\\
	Germany}
\email{zoechling@mathematik.tu-darmstadt.de}

\begin{document}

\subjclass[2020]{35Q35}
\keywords{energy balance model, periodic solution for large forces, dynamic boundary condition, weak-strong uniqueness}

\begin{abstract}
This article concerns time-periodic solutions to a two-dimensional Sellers-type energy balance model coupled to the three-dimensional primitive equations via a dynamic boundary condition.  It is
shown that the underlying equations admit at least one strong time-periodic solution, provided the forcing term is time-periodic. The forcing term does not need to satisfy a smallness condition and is allowed to
be arbitrarily large.
\end{abstract}

\maketitle

\section{Introduction  }\label{Xsec1-1}\label{sec: intro}

The theory of periodic solutions to ordinary and partial differential equations has a very long and rich history. Several  approaches to various  solution concepts (weak, strong, etc.)
have been developed. When considering equations arising from the theory of incompressible, viscous fluid flows, first results on the existence of periodic solutions for the Navier-Stokes equations go
back to Serrin \cite{bib0037} and Prodi \cite{bib0035}. Further results in this direction are due e.g., to Maremonti \cite{bib0033} and Kozono and Nakao \cite{bib0029},  Galdi and Sohr \cite{bib0020}, Yamazaki \cite{bib0041} and
Galdi and Silvestre  \cite{bib0019}. A general approach to periodic solutions for incompressible fluid flows was developed by Geissert, Hieber and Nguyen \cite{bib0021}. Their method was based on the interpolation
and smoothing properties of the linearized problem. We also refer here to the survey article by Galdi and Kyed \cite{bib0018} and the references therein.

A characterization of strong periodic solutions for linear evolution equations  within the $\rL^p$-setting for $1<p<\infty$ was obtained by Arendt and Bu \cite{bib0002}. For an extension of this result to the
semilinear and quasilinear setting, we refer to the work of Hieber and Stinner \cite{bib0028}; for an approach based on the Da Prato-Grisvard theorem, we refer to \cite{bib0004}.
Let us emphasize that all these approaches have the drawback that the external force needs to satisfy a smallness condition with respect to a certain norm.

In this article we investigate a system coupling a Sellers-type energy balance model with the primitive equations. Note that the coupling is described via a dynamical boundary
condition for the temperature at the interface. We show that the underlying coupled system of equations
admits at least one strong time-periodic solution for time periodic forces $f \in \rL^2(0,\tau;\rL^2(\Omega))$ {\em without assuming any smallness condition on $f$}.
This is fairly surprising since all the approaches cited above need certain smallness conditions on the outer force $f$. Let us note that a result  of this type is not known e.g., for the
classical Navier-Stokes equations or the Keller-Segel system.

Let us now describe an energy balance model (EBM) coupled to the primitive equations in some detail. Energy balance models  belong to a class of simplified climate models and represent the Earth's
climate by balancing the incoming solar radiation with the outgoing terrestrial radiation \cite{bib0005,bib0034,bib0036}. Despite their relative simplicity compared to full-scale climate models,
EBMs capture the essential physics that govern Earth's climate and provide valuable insights into climate dynamics and feedback mechanisms. They thus form  a useful tool in both theoretical studies
and practical applications. For instance, questions regarding the well-posedness, uniqueness, and long-time behaviour of this model, when not coupled to a fluid, have been investigated \cite{Diaz1997,Diaz2022,Hetzer_01}, as well as controllability and inverse problems related to the recovery of model parameters \cite{bib0014,bib0006}, and applications to understanding climate change \cite{bib0008,bib0009,bib0023}. Diaz and Tello investigated in \cite{bib0013} an energy balance model for a {\em given} fluid subject to a dynamic
boundary condition and proved the existence of a weak solution to this system.

Our coupled model consists in coupling the energy balance model  with the fundamental equation for
geophysical flows, the primitive equations. The latter have been introduced and investigated by Lions, Temam and Wang in a series of articles, see \cite{bib0030,bib0031,bib0032}. These equations describe the
large-scale motion of the ocean or the atmosphere by coupling a horizontal momentum equations with a vertical hydrostatic balance and an incompressibility condition. They  can be derived rigorously
from the Navier-Stokes equations under the assumption of hydrostatic balance, see \cite{bib0015} for details. At this stage it is useful to highlight the built-in time-scale separation of the coupled model: the primitive-equations (PE) dynamics evolve on a fast time scale \(\tau_{\mathrm{fast}}\) (synoptic to seasonal), whereas the energy-balance model (EBM) evolves on a slow time scale \(\tau_{\mathrm{slow}}\) (from days-months up to years-decades). To encode this separation, we introduce the non-dimensional parameter
\[
\varepsilon \coloneqq \frac{\tau_{\mathrm{fast}}}{\tau_{\mathrm{slow}}}, \qquad 0<\varepsilon\ll 1,
\]
and rescale the PE time variable by \(\varepsilon\). In the non-dimensional PE system this places a factor \(\varepsilon\) in front of the time derivative, making the fast atmospheric dynamics explicit relative to the slower EBM evolution. For more details, we refer to \cite{bib0010}.

Given $\tau > 0$, the energy balance model coupled to the primitive equations reads as
\begin{equation}
\left\{
\begin{aligned}
\eps \, \partial_t v^\eps + u^\eps \cdot \nabla v^\eps - \Delta v^\eps+ \nablaH p^\eps &= f^\eps_1,  &\quad \text{in } (0,\tau) \times  \Omega  ,\\[1mm]
\partial_z p^\eps &= -T,  &\quad \text{in } (0,\tau) \times \Omega,\\[1mm]
\operatorname{div} u^\eps &= 0, &\quad \text{in } (0,\tau) \times \Omega,\\[1mm]
\partial_t T + u^\eps \cdot \nabla T - \Delta T &= f_2,  &\quad \text{in } (0,\tau) \times \Omega ,\\[1mm]
T|_{\Gamma_u} &= \rho,  &\quad \text{in } (0,\tau) \times G ,\\[1mm]
\partial_t \rho + \bar{v}^\eps \cdot \nablaH \rho - \DeltaH \rho + (\partial_z T)|_{\Gamma_u} &= R(x,\rho) + f_3,  &\quad \text{in } (0,\tau) \times G .
\end{aligned}
\right.
\label{eq: primitive + EBM}
\end{equation}
Here, \(\Omega = G \times (0,1)\), with \(G \subset \mathbb{R}^2\) representing  the horizontal domain, $\Omega$ denotes the spatial domain for the ocean  and \(\Gamma_u = G \times \{1\}\)
its upper (surface) boundary. Moreover, $u = (v, w):  (0,\tau) \times \Omega  \rightarrow \mathbb{R}^3$ denotes the velocity field of the ocean governed by the primitive equations,
$p : (0,\tau) \times \Omega \rightarrow \mathbb{R}$ is the pressure and  $T :  (0,\tau) \times \Omega  \rightarrow \mathbb{R}$ the temperature of the ocean. We denote the horizontal gradient, divergence,
and Laplacian by $\nablaH := (\partial_x,\partial_y)^T,$ $\divH := \nablaH \cdot,$ and $\DeltaH := \partial_x^2 + \partial_y^2$ respectively.  The system is supplemented with appropriate boundary conditions described in
\autoref{sec: prelim + main} in detail.  The surface temperature  of the ocean is given by $\rho = T|_{\Gamma_u} : (0,\tau) \times G  \rightarrow \mathbb{R}$, while $\vbar (x,y) = \int_0^1 v(x,y,\xi) \d \xi $ is the vertical average of the horizontal velocity.

The functions  $f_1, f_2$ and $f_3$ represent the periodic external forces acting on the velocity and the temperature, respectively. The consideration of periodic forcing is motivated by the
various cyclic phenomena affecting the Earth's climate system, ranging from daily and seasonal solar radiation to decadal solar cycles and Milankovitch orbital cycles, see e.g.,
\cite{bib0034}.

Note that surface temperature $\rho$ in the last line of \eqref{eq: primitive + EBM} is given by a dynamic boundary condition. Hence, the above system cannot be treated by the methods
developed in \cite{bib0007}.

Taking a more abstract perspective, it seems that no results are known concerning strong, periodic solutions for systems where the  forces are acting  by means of  dynamic boundary conditions.
Let us note that Badii and Diaz \cite{bib0003} established the existence of a weak periodic solution to an energy balance model not being coupled to a fluid.
Hence, our result on the   energy balance model coupled to the primitive equations seems to be the first result for systems subject to  dynamic boundary conditions allowing for  strong, periodic
solutions. Furthermore, as written above, it is surprising that our result holds true for arbitrary large forces $f$.

Some words about our strategy to prove the existence of time-periodic solutions to the coupled energy balance model are in order. Our approach is inspired by
the one developed by Galdi, Hieber, and Kashiwabara \cite{bib0017}, who established  the existence of strong time-periodic solutions to the primitive equations.
We start by showing  the existence of a weak \(T^*\)-periodic
solution to \eqref{eq: primitive + EBM} for arbitrary \(T^*\)-periodic forcing terms by using Galerkin's method and Brouwer's fixed point theorem. In a second step,
we extend the recent global strong well-posedness result to \eqref{eq: primitive + EBM} for initial data in $H^1(\Omega)$ given in  \cite{bib0010}.
Observe that obtaining this global well-posedness result is delicate due to the dynamic boundary condition for $\rho$ at the boundary.
Finally,  we establish a weak-strong uniqueness result to conclude the main result.

This paper is organized as follows. In \autoref{sec: prelim + main}, we define the notion of a weak and strong time-periodic solution and state our main result.
In \autoref{sec: global}, the existence and uniqueness of a strong solution to the initial boundary value problem \eqref{eq: primitive + EBM} for initial data in $H^1(\Omega)$ is proven.
Furthermore, we prove in  \autoref{sec:WS} the existence of a weak \(T^*\)-periodic solution to \eqref{eq: primitive + EBM} for arbitrary large \(T^*\)-periodic forcing terms using
Galerkin's method and Brouwer's fixed point theorem. Finally, a  weak-strong uniqueness result is established in \autoref{sec:w-s-u}, which guarantees the validity of our main theorem.

\section{Preliminaries and Main Results}\label{Xsec2-2}\label{sec: prelim + main}
Let $\tau>0$ be given. Consider a cylindrical domain $\Omega = G \times (0,1)$, with $G=(0,1)^2$, and denote by $\Gamma $ its boundary, i.e.,
\[
\partial \Omega = \Gamma = \Gamma_u \cup \Gamma_b \cup \Gamma_l ,
\]
where $\Gamma_u, \Gamma_b, \Gamma_l$ denote the respective upper, bottom and lateral parts of the boundary. They  are given by
\[
\Gamma_ u = (0,1)^2 \times \{ 1 \}, \quad \Gamma_b = (0,1)^2 \times \{0\}, \quad \Gamma_l = \partial G \times [0,1].
\]
For simplicity of the notation we denote in the following the rescaled oceanic velocity $u^\eps$ and the rescaled pressure $p^\eps$ by $u$ and $p$ respectively. Moreover, as it does not affect the analysis, we set the scaling constant $\eps \equiv 1$. We aim to study the following set of equations
\begin{equation*}
\left\{
\begin{aligned}
\partial_t v + u \cdot \nabla  v- \Delta v + \nablaH p &= f_1,  &\quad \text{ in } & (0,\tau) \times \Omega,\\
\dz p &= -T,  &\quad \text{ in } & (0,\tau) \times \Omega,\\
\div u &=0, &\quad \text{ in } & (0,\tau) \times \Omega ,\\
\partial_t T + u \cdot \nabla T -\Delta T&= f_2, \quad &\text{ in }& (0,\tau) \times \Omega, \\
 T|_{\Gamma_u} &= \rho, \quad &\text{ in }& (0,\tau) \times G ,\\
\partial_t \rho + \vbar \cdot \nablaH  \rho- \DeltaH \rho + (\dz  T)|_{ \Gamma_u}  &= R(x,\rho) +f_3, \quad &\text{ in }& (0,\tau) \times G ,\\
\end{aligned}
\right.
\end{equation*}
where $u = (v, w) : (0,\tau) \times \Omega  \rightarrow \R^3$ denotes the ocean's velocity, $p : (0,\tau) \times \Omega  \rightarrow \R$ its pressure, $T :(0,\tau) \times \Omega \to \R$ the ocean's temperature, $\rho =T|_{\Gamma_u}: (0,\tau) \times G \to \R$ the temperature evaluated at the surface and $\vbar (x,y) = \int_0^1 v(x,y,\xi) \d \xi $ the vertical average of the horizontal velocity. Moreover, $f_1 \colon (0,\tau) \times \Omega  \to \R^2$, $f_2 \colon  (0,\tau) \times \Omega \to \R$ and $f_3 \colon (0,\tau) \times G  \to \R$ denote external forces for the velocity and temperature respectively.

The above system is supplemented with the boundary conditions
\begin{equation}\label{eq:bc}
\begin{aligned}
  (\dz v)|_{\Gamma_u \cup \Gamma_b} = w|_{\Gamma_u \cup \Gamma_b} = 0, \quad (\dz T)|_{\Gamma_b}=0 \ \text{ and } \ u,p,T,\rho \quad \text{ are periodic on }\ \partial G \times [0,1],
  \end{aligned}
\end{equation}
as well as the initial data
\begin{equation}\label{eq: initial data}
     v(0) = v_0, \quad T(0)= T_0.
\end{equation}
The surface temperature $\rho$ is governed by the two-dimensional energy balance model
\begin{equation}\label{eq:EBM}
\partial_t \rho +\vbar \cdot \nablaH  \rho - \DeltaH \rho  + (\partial_z T)\vert_{\Gamma_u} = R(x,\rho) + f_3, \quad \text{in }  (0,\tau) \times G.
\end{equation}
In the above, all physical constants (such as the heat capacity and diffusion constants) have been normalized to one. In \eqref{eq:EBM}, the term \(\DeltaH \rho\) models the horizontal diffusion of temperature, \(\vbar\cdot \nablaH \rho\) captures the horizontal advection at the surface in a simplified way, and \((\partial_z T)\vert_{\Gamma_u}\) represents the vertical heat flux from the ocean interior to the surface. Note that the transport on the boundary is simplified by considering $\vbar \cdot \nablaH \rho$.
The reaction term is given by
\begin{equation}\label{eq: radiation}
R(x, \rho) = Q(x)\,\beta(\rho) - |\rho|^3 \rho, \quad x \in \Gamma_u,\; \rho \in \mathbb{R},
\end{equation}
where the solar radiation \(Q \in C^1_b(G)\) is positive, and the outgoing radiation is modeled via a Stefan-Boltzmann law. The co-albedo function is parametrized as
\begin{equation}\label{eq: coalbedo}
\beta(\rho) = \beta_1 + (\beta_2 - \beta_1)\, \frac{1+\tanh(\rho-\rho_{\mathrm{ref}})}{2},
\end{equation}
with \(0<\beta_1<\beta_2\) corresponding to the co-albedo values for ice-covered and ice-free conditions, respectively. The parameter $\rho_{\mathrm{ref}}$ denotes a critical reference temperature, usually chosen as $\rho_{\mathrm{ref}} =263\, \text{K}$,
at which ice turns white, see \cite{Diaz2022}. We point out that our EBM is of Sellers-type, meaning that the co-albedo is Lipschitz continuous. Another important class of EBMs is the Budyko-type, characterised by a discontinuous co-albedo. For the latter, it is known that, even for the EBM alone, uniqueness of solutions may fail; see, for instance, \cite{Diaz1997} for a discussion of the non-uniqueness of weak solutions. We refer
to \cite{bib0010} and the references therein for more information  on EBMs.

The equation of state $\dz p = -T$ yields
\begin{equation} \label{eq:press}
    p(t,x,y,z) = p_s(t,x,y) -  \int_0^z T(\cdot, \xi) \d \xi =:p_s(t,x,y) - \theta(t,x,y,z) ,
\end{equation}
where $p_s$ denotes the surface pressure $p_s(t,x,y)=p(t,x,y,z=0)$.
Moreover, the vertical velocity $w=w(v)$ is determined by the divergence free condition, i.e.,
\begin{equation} \label{eq:w}
    w(t,x,y,z) = -\int_0^z \divH v(t,x,y,\xi) \d \xi.
\end{equation}
Using this as well as the boundary conditions for $w$ we rewrite the system \eqref{eq: primitive + EBM} as
\begin{equation}
\left\{
\begin{aligned}
\partial_t v + v \cdot \nablaH  v+ w(v)\cdot \dz v - \Delta v + \nablaH p_s &= f_1 +\nablaH \theta,  &\quad \text{ in } & (0,\tau) \times \Omega ,\\
\divH \vbar &=0, &\quad \text{ in } &(0,\tau) \times G ,\\
\partial_t T + u \cdot \nabla T -\Delta T&= f_2, \quad &\text{ in }& (0,\tau) \times \Omega , \\
 T|_{\Gamma_u} &= \rho, \quad &\text{ in }& (0,\tau) \times G,\\
\partial_t \rho + \vbar \cdot \nablaH  \rho- \DeltaH \rho + (\dz  T)|_{ \Gamma_u}  &= R(x,\rho)+f_3, \quad &\text{ in }& (0,\tau) \times G,\\
\end{aligned}
\right.
\label{eq: primitive + EBM simplified}
\end{equation}
supplemented by the boundary conditions \eqref{eq:bc} as well as the initial data \eqref{eq: initial data}.

\noindent
At this point, some words about our approach for solving \eqref{eq: primitive + EBM simplified} are in order.
Let $m \in \mathbb{N}$ be a positive number and $q \in (1,\infty)$.
As usual, we denote by  $\rL^q(\Omega)$ and $\rH^{m,q}(\Omega) = \rW^{m,q}(\Omega)$ respectively the Lebesgue and Sobolev spaces. For more information on function spaces we refer e.g. to \cite{bib0001} and \cite{bib0040}. In the following, we need terminology to describe periodic boundary conditions on $\Gamma_l = \partial G \times [0,1]$ as well as on $\partial G$. Given a positive integer $ m \in \mathbb{N}$ and $q \in (1,\infty)$, we define the spaces
\begin{equation*}
    \rH^{m,q}_\per (\Omega) := \bar{\rC^\infty_\per (\bar{\Omega})}^{\| \cdot \|_{\rH^{m,q}(\Omega)}} \ \text{ and } \ \rH^{m,q}_\per (G) := \bar{\rC^\infty_\per (\bar{G})}^{\| \cdot \|_{\rH^{m,q}(G)}},
\end{equation*}
where horizontal periodicity is modeled by the function spaces $\rC^\infty_\per(\bar{\Omega})$ and $\rC^\infty_\per(\bar{G})$ defined in \cite{bib0027}. Of course, we interpret $\rH^{0,q}_\per$ as $\rLq$. For more information on spaces equipped with periodic boundary conditions, see also \cite{bib0024,bib0025,bib0027}. Next, we define \emph{hydrostatically solenoidal vector fields} by
\begin{equation*}
    \rL^q_{\sigmabar}(\Omega) = \overline{\{v \in \rC^\infty(\overline{\Omega};\R^2) \colon \divH \vbar = 0 \}}^{\| \cdot \|_{\rL^q(\Omega)}}.
\end{equation*}
Its role for the primitive equations parallels the one of the solenoidal vector fields for the Navier-Stokes equations. Next, we introduce the function spaces $\rX_0$, $\rX_1$ and $\rX_w$ by
\begin{equation*}
    \begin{aligned}
        \rX_0 &:= \rL^2_{\sigmabar}(\Omega;\R^2) \times \rL^2(\Omega) \times \rL^2(G),  \\
        \rX_1 & :=\{ v \in \rH^2_\per (\Omega;\R^2) \cap \rL^2_{\sigmabar} (\Omega;\R^2) \colon (\dz v)_{\Gamma_u \cup \Gamma_b} = 0 \} \times  \{ T\in \rH^2_\per(\Omega) : \ (\dz T)|_{\Gamma_b}=0 \}\times \rH^2_{\per}(G), \text{ and}  \\  \rX_w &:= (\rX_0,\rX_1)_{\frac{1}{2},2} =\rH^1_\per(\Omega;\R^2) \cap \rL^2_{\sigmabar}(\Omega;\R^2) \times \rH^1_\per(\Omega) \times \rH^1_\per(G),
    \end{aligned}
\end{equation*}
where $(\cdot, \cdot)_{\theta,p}$ denotes the real interpolation functor for $\theta \in (0,1)$ and $p \in (1,\infty)$. Then given $0 < \tau \leq \infty$ we define the data space $\E_{0,\tau}$ and the solution spaces $\E_{1,\tau}$ as well as $\E_{w,\tau}$ by
\begin{equation*}
    \begin{aligned}
        \E_{0,\tau} := \rL^2(0,\tau;\rX_0), \enspace \E_{1,\tau} := \rH^1(0,\tau;\rX_0) \cap \rL^2(0,\tau; \rX_1), \ \text{ and }\ \E_{w,\tau} :=  \rC_w([0,\tau];\rX_0) \cap \rL^2(0,\tau; \rX_w) .
    \end{aligned}
\end{equation*}

\noindent
We are now in position to define the solution concepts being used throughout this article. We only premise that a function $f\in L^2(0,\tau;L^2(\Omega))$, all $\tau>0$, is called $T^*$-periodic, $T^*>0$, if $f(t,x)=f(t+T^*,x)$, for all $(t,x)\in [0,+\infty)\times \Omega$.

\begin{definition}\label{def: weak sol}{\rm
  Let $T^*>0$ and $(f_1,f_2,f_3) \in \E_{0,\tau}$ for all $\tau>0$. Let $(f_1,f_2,f_3)$ be $T^*$-periodic. Then $(v,T,\rho) \in \E_{w,\tau}$ for all $\tau >0$ is called a weak $T^*$-periodic solution to \eqref{eq: primitive + EBM simplified}
  subject to \eqref{eq:bc} if
\begin{enumerate}
\item[(a)] $v$, $T$, $\rho$ are $T^*$-periodic.
 \item[(b)] For all $\tau>0$ and all $(\varphi, \tilde{\varphi}) \in \rC^1([0,\tau];\rX_w) \cap \rL^2(0,\tau;\rX_1)$ the following equality holds:
 \begin{equation}
     \begin{aligned}
           &\int_{0}^t \bigl[(v, \dt \varphi)_2-(\nabla v,\nabla \varphi)_2+(v \cdot \nablaH \varphi,v)_2+(w(v)\cdot \partial_z\varphi,v)+( \nablaH \theta,v)_2 \bigr]\d s \\ & \, + \int_0^t \int_\Omega \bigl[T \cdot \dt \tilde{\varphi} -\nabla T\cdot \nabla\tilde{\varphi}+u\cdot\nabla\tilde{\varphi} \cdot T\bigr]\d x  \d s
   \\ &  + \int_0^t \int_{G} \bigl[\rho \cdot \dt \psi  -\nablaH \rho \cdot \nablaH \psi + \vbar \cdot \nablaH \psi \cdot \rho   -R(x,\rho) \psi \bigr] \d \sigma\d s \\
           & = (v(t),\varphi(t))_2-(v(0),\varphi(0))_2+\int_{\Omega}\bigl[T(t)\tilde{\varphi}(t)-T(0)\tilde{\varphi}(0)\bigr] \d x +\int_{G}\bigl[\rho(t)\psi(t)-\rho(0)\psi(0)\bigr] \d \sigma
      \\
      & \quad -\int_0^t (f_1,\varphi)_2 \d s-\int_0^t\int_{\Omega} f_2 \cdot \tilde{\varphi} \d x\d s -\int_0^t\int_{G} f_3 \cdot \psi\d \sigma \d s,
     \end{aligned}
     \label{eq: (b) definition weak solution periodic}
    \end{equation}
    where $\psi = \tilde{\varphi}|_{\Gamma_u}$ and $w(v)$ and $\theta$ are defined, respectively, in \eqref{eq:w} and in \eqref{eq:press}.
\item[(c)] For all $\tau>0$ and all $t\in (0,\tau]$, and a.a. $s\in [0,t)$, the triple $(v,T,\rho)$ satisfies the {\em  energy inequality}
\begin{equation}
\begin{aligned}
    &\norm{v(t)}_{2}^2 +\norm{T(t)}_{2}^2 + \norm{\rho(t)}_{2}^2+ 2 \int_s^t (\norm{\nabla v(l)}_{2}^2 +\norm{\nabla T(l)}_{2}^2 + \norm{\nablaH\rho(l)}_{2}^2 +  \norm{\rho(l)}_{5}^5) \d l \\ &
\le \norm{v(s)}_{2}^2 +\norm{T(s)}_{2}^2 + \norm{\rho(s)}_{2}^2 + 2 \int_s^t \biggl[(f_1+ \nablaH \theta,v)_2+\int_\Omega f_2 \cdot T \d x+ \int_G (f_3 + Q(x)\beta(\rho)) \cdot \rho  \d \sigma \biggr]\d l.
\end{aligned}
\label{Xeqn11-11}
\end{equation}
\end{enumerate}
Furthermore, if a weak $T^*$-periodic solution $(v,T,\rho)$ to \eqref{eq: primitive + EBM simplified} subject to \eqref{eq:bc} satisfies $(v,T) \in \E_{1,\tau}$ for all $\tau>0$, then $(v,T,\rho)$ is
called a {\em strong $T^*$-periodic solution}.

Finally, if \((f_1, f_2, f_3) \in \rL^2(\rX_0)\) are time-independent, then  \((v, T, \rho) \in \rX_w\) is called a \emph{weak steady-state solution} to \eqref{eq: primitive + EBM simplified} subject to the boundary conditions \eqref{eq:bc}, if it satisfies \eqref{eq: (b) definition weak solution periodic} for all test functions \((\varphi, \tilde{\varphi}) \in \rX_1\). A weak steady-state solution \((v, T, \rho)\) is called \emph{strong} if \((v, T, \rho) \in \rX_1\).
}
  \end{definition}

We are now in the position to formulate our main theorem concerning the existence of strong $T^*$-periodic solutions in the presence of arbitrarily large forces.
\begin{theorem} \label{thm: main}  \mbox{} \\
 Let $T^*>0$ and $(f_1,f_2,f_3) \in \E_{0,\tau}$ be, for all $\tau>0$, $T^*$-periodic given external forces. Then the system \eqref{eq: primitive + EBM simplified} subject to the boundary conditions \eqref{eq:bc} has at least one strong $T^*$-periodic solution.
\label{Xenun1-1}
\end{theorem}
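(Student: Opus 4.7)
The strategy, which directly follows the outline sketched in the introduction, is a three-step procedure inspired by Galdi-Hieber-Kashiwabara \cite{bib0017}, combining a weak periodic existence result, a global strong well-posedness result for $H^1$-data, and a weak-strong uniqueness theorem. The crucial observation is that each of these three building blocks individually requires no smallness of the forcing, and their combination yields a strong periodic solution for arbitrary $f=(f_1,f_2,f_3)\in\E_{0,\tau}$.

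\textbf{Step 1: Existence of a weak $T^*$-periodic solution.} I would first construct a weak $T^*$-periodic solution $(v,T,\rho)\in \E_{w,\tau}$ in the sense of \autoref{def: weak sol}. Following the classical scheme, I project the system onto finite-dimensional Galerkin subspaces spanned by eigenfunctions of suitable self-adjoint operators compatible with the boundary and periodicity conditions (a Stokes-type operator for the hydrostatic velocity, the Neumann/periodic Laplacian for $T$, and the periodic Laplacian on $G$ for $\rho$, coupled through the dynamic condition $T|_{\Gamma_u}=\rho$). For each Galerkin level $n$, the periodic problem reduces to finding a fixed point of the period map $\mathcal{P}_n\colon (v_0^n,T_0^n,\rho_0^n)\mapsto (v_n(T^*),T_n(T^*),\rho_n(T^*))$ on an appropriate ball of $X_0^n$. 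A uniform a priori estimate derived from the basic energy identity (testing with $(v,T,\rho)$ itself and exploiting the sign of $-|\rho|^3\rho$ together with the Lipschitz boundedness of $\beta$) provides an invariant ball, so Brouwer's fixed point theorem supplies a Galerkin-level $T^*$-periodic solution. Passing to the limit $n\to\infty$ via standard compactness (Aubin-Lions) and lower semicontinuity arguments yields a weak $T^*$-periodic solution satisfying the energy inequality \eqref{Xeqn11-11}.

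\textbf{Step 2: Promoting to strong regularity at a good time.} Integrating the energy inequality over $[0,T^*]$ and using $T^*$-periodicity, one obtains
\begin{equation*}
\int_0^{T^*}\bigl(\|\nabla v\|_2^2+\|\nabla T\|_2^2+\|\nabla_H\rho\|_2^2+\|\rho\|_5^5\bigr)\d s\le C(\|f\|_{\E_{0,T^*}},\|Q\|_\infty,\beta_1,\beta_2).
\end{equation*}
Hence the integrand is finite a.e., so there exists $t_0\in[0,T^*]$ for which $(v(t_0),T(t_0),\rho(t_0))\in X_w$. Invoking the global strong well-posedness result proved in \autoref{sec: global} for $H^1$-initial data, there exists a unique strong solution $(\tilde v,\tilde T,\tilde \rho)\in \E_{1,\tau}$ on $[t_0,t_0+\tau]$ for any $\tau>0$, with the same forcing and starting from $(v(t_0),T(t_0),\rho(t_0))$.

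\textbf{Step 3: Weak-strong uniqueness and periodicity.} Applying the weak-strong uniqueness result established in \autoref{sec:w-s-u} on $[t_0,t_0+\tau]$, the weak periodic solution $(v,T,\rho)$ coincides with the strong solution $(\tilde v,\tilde T,\tilde \rho)$ on that interval. In particular $(v,T,\rho)$ inherits the regularity of $\E_{1,\tau}$ on $[t_0,+\infty)$, and by $T^*$-periodicity of the weak solution this regularity in fact extends to all of $\R_{+}$, producing a strong $T^*$-periodic solution. The main obstacle to this plan is concentrated in Step 2 and Step 3: on the one hand, the dynamic boundary condition carried by $\rho$ forces the strong well-posedness in $H^1$ and the weak-strong uniqueness to be performed in a space respecting the trace coupling $T|_{\Gamma_u}=\rho$ (hence the careful definition of $X_1$), which complicates the usual energy estimates because one cannot test only in the interior; on the other hand, the Stefan-Boltzmann term $-|\rho|^3\rho$ requires the $L^5$-control in \eqref{Xeqn11-11} to absorb the superlinear reaction when comparing weak and strong solutions. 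Once these two technical points are handled (which is precisely the content of Sections \ref{sec: global} and \ref{sec:w-s-u}), the combination outlined above yields \autoref{thm: main}.
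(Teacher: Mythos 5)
Your proposal is correct and follows essentially the same route as the paper: a Galerkin--Brouwer construction of a weak $T^*$-periodic solution, selection of a time $t_0$ with $\rX_w$-regularity from the energy inequality, the global strong well-posedness result of \autoref{sec: global} started at $t_0$, and the weak--strong uniqueness argument of \autoref{sec:w-s-u} combined with periodicity to upgrade the weak periodic solution to a strong one. The only difference is one of detail, not of strategy: the paper's proof of the theorem consists precisely in carrying out the mollification/Gronwall weak--strong uniqueness estimate that you defer to as an established result.
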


As a consequence, we conclude that if the external forces \((f_1, f_2, f_3)\) are time-independent, then there exists a steady-state solution to our energy balance model coupled with the primitive equations via a dynamic boundary condition.

\begin{corollary}    If \((f_1, f_2, f_3) \in \rL^2(\rX_0)\), then the system \eqref{eq: primitive + EBM simplified}, subject to the boundary conditions \eqref{eq:bc}, admits at least one strong steady-state solution.
\label{Xenun6-1}
\end{corollary}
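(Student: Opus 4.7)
The plan is to deduce this corollary from Theorem~\ref{thm: main} by sending the period to zero. Since a time-independent forcing is automatically $T^*$-periodic for every $T^*>0$, Theorem~\ref{thm: main} produces, for each $n\in\N$, a strong $T_n^*$-periodic solution $(v_n,T_n,\rho_n)$ of \eqref{eq: primitive + EBM simplified} with $T_n^*:=1/n$. My aim is to show that, along a subsequence, $(v_n,T_n,\rho_n)$ converges to a limit that is constant in time and furnishes the desired strong steady-state solution.

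First, I derive $n$-independent bounds. Setting $s=0$, $t=T_n^*$ in the energy inequality \eqref{Xeqn11-11} and exploiting $T_n^*$-periodicity (which forces the boundary contributions on both sides to cancel) together with Young's and Poincar\'e's inequalities yields
\[
\frac{1}{T_n^*}\int_0^{T_n^*}\bigl(\|\nabla v_n\|_2^2+\|\nabla T_n\|_2^2+\|\nablaH\rho_n\|_2^2+\|\rho_n\|_5^5\bigr)\,\d t \le C,
\]
for some $C$ depending only on $\|(f_1,f_2,f_3)\|_{\rX_0}$ and the parameters defining $R$, but not on $n$. In particular there exists $t_n\in[0,T_n^*)$ such that $(v_n(t_n),T_n(t_n),\rho_n(t_n))$ is bounded in $\rX_w$. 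Starting the global strong well-posedness theory of \autoref{sec: global} at the instant $t_n$ and using $T_n^*$-periodicity to translate the resulting $\E_{1,[t_n,t_n+\tau]}$-bound to $[0,\tau]$, one obtains uniform bounds on $(v_n,T_n,\rho_n)$ in $\E_{1,\tau}$ for every $\tau>0$.

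Next, I extract a subsequence (not relabelled) with $(v_n,T_n,\rho_n)\rightharpoonup(v_\infty,T_\infty,\rho_\infty)$ weakly in $\E_{1,\tau}$ and, by the Aubin--Lions lemma, strongly in $\rL^2(0,\tau;\rX_0)$. Combined with the uniform $\rL^5$-bound on $\rho_n$ obtained above, this strong convergence is enough to pass to the limit in the nonlinearities $u_n\cdot\nabla v_n$, $u_n\cdot\nabla T_n$, $\vbar_n\cdot\nablaH\rho_n$ and in the reaction $R(x,\rho_n)$ (using the Lipschitz character of the Sellers co-albedo $\beta$). To verify that the limit is time-independent, note that the uniform $\rH^1(0,\tau;\rX_0)$-bound yields uniform $\tfrac{1}{2}$-H\"older continuity in $\rX_0$, so $T_n^*$-periodicity implies
\[
\|(v_n,T_n,\rho_n)(\cdot+h)-(v_n,T_n,\rho_n)(\cdot)\|_{\rX_0}\le C\bigl|h-T_n^*\lfloor h/T_n^*\rfloor\bigr|^{1/2}\le C\,(T_n^*)^{1/2}\xrightarrow[n\to\infty]{}0
\]
for every fixed $h\ge 0$; hence $(v_\infty,T_\infty,\rho_\infty)$ is constant in time.

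Finally, choosing time-independent test functions in the weak formulation \eqref{eq: (b) definition weak solution periodic} makes the time-derivative and boundary contributions cancel, so that passing to the limit yields precisely the steady weak formulation for $(v_\infty,T_\infty,\rho_\infty)$. Time-independence together with the uniform $\rL^2(0,\tau;\rX_1)$-bound places the limit in $\rX_1$, so it is a strong steady-state solution in the sense of Definition~\ref{def: weak sol}. The main obstacle, in my view, is the derivation of the $n$-uniform $\E_{1,\tau}$-bound: the energy inequality alone delivers only time-averaged estimates, and promoting them to a pointwise-in-time $\rX_w$-estimate at a well-chosen instant, and then propagating the bound via the quantitative strong theory of \autoref{sec: global}, is the delicate point of the argument.
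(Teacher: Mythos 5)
Your route---invoking \autoref{thm: main} with periods $T_n^*=1/n$, extracting a limit, and using the uniform $1/2$-H\"older continuity in $\rX_0$ coming from an $\rH^1(0,\tau;\rX_0)$-bound to force the limit to be time-independent---is genuinely different from the way the corollary is meant to follow in the paper. There, the point of introducing weak and strong \emph{steady-state} solutions in \autoref{def: weak sol} (with time-independent test functions in $\rX_1$) is that the whole scheme specializes to the stationary setting: one runs the Galerkin--Brouwer construction of \autoref{prop: weak time periodic} directly on the stationary system to obtain a weak steady state, regards it as a time-constant weak periodic solution, and then identifies it with the global strong solution of \autoref{prop: global with force} through the weak--strong uniqueness argument of \autoref{sec:w-s-u}, which yields $(v,T,\rho)\in\rX_1$. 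Your H\"older-plus-periodicity device for showing the limit is constant in time is correct and nice, and the limit passages you sketch (Aubin--Lions for the quadratic terms, $\rL^5$-bound and Lipschitz co-albedo for $R$, time-independence plus the $\rL^2(0,\tau;\rX_1)$-bound to land in $\rX_1$) would all be standard \emph{granted} the uniform bounds.

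The genuine gap is precisely at the step you yourself flag: the $n$-uniform bound cannot be extracted from the statement of \autoref{thm: main} together with the energy inequality \eqref{Xeqn11-11}. The dissipation on the left of \eqref{Xeqn11-11} controls only $\norm{\nabla v_n}_2$, $\norm{\nabla T_n}_2$, $\norm{\nablaH \rho_n}_2$ and $\norm{\rho_n}_5$, whereas the right-hand side contains $(f_1+\nablaH\theta_n,v_n)_2$, whose absorption requires a bound on $\norm{v_n}_2$. Under the boundary conditions \eqref{eq:bc} ($\partial_z v=0$ on $\Gamma_u\cup\Gamma_b$, lateral periodicity) there is no Poincar\'e inequality for $v$: the space $\rL^2_{\sigmabar}(\Omega)$ contains nonzero constants, so $\norm{v_n}_2$ is not dominated by $\norm{\nabla v_n}_2$, and the periodicity cancellation gives no information on the size of $\norm{v_n(t)}_2$ itself---the solutions delivered by \autoref{thm: main} for different periods are a priori unrelated and could degenerate as $n\to\infty$. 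Consequently your "well-chosen instant" $t_n$ with $(v_n,T_n,\rho_n)(t_n)$ bounded in $\rX_w$ is not justified (the mean-value argument only reaches the gradient part, not the $\rL^2$ part), and without it the quantitative theory of \autoref{prop: global with force}---whose bounds $B_1,B_2,B_3$ depend on the $\rX_w$-norm of the data---produces nothing uniform in $n$; everything downstream then collapses. To close your argument you would have to open up the construction in \autoref{prop: weak time periodic} and build a period-independent a priori bound into it (rather than citing the theorem as a black box), or else treat the stationary problem directly, as the paper's definitions indicate.
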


\section{Global strong solutions}\label{Xsec3-3}\label{sec: global}
In this section we establish the existence of a unique global strong solution of \eqref{eq: primitive + EBM simplified} subject to the boundary conditions \eqref{eq:bc} and
arbitrarily large forces $f_1$, $f_2$ and $f_3$. The proof is a modification of a result obtained recently in \cite{bib0010}.

\begin{proposition}\label{prop: global with force}
Let $\tau>0$, $(v_0,T_0,\rho_0) \in \rX_w$ and $(f_1,f_2,f_3)\in \mathbb{E}_{0,\tau}$. Then the system \eqref{eq: primitive + EBM simplified} subject to \eqref{eq:bc} and \eqref{eq: initial data} admits a unique, global strong solution $(v,T,\rho)$ satisfying $(v,T,\rho)\in \E_{1,\tau}$.
\end{proposition}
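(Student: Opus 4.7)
The plan is to follow the strategy developed in \cite{bib0010} for the unforced initial value problem and incorporate the forcing terms $(f_1,f_2,f_3) \in \E_{0,\tau}$ through appropriate modifications of the a priori estimates. The argument naturally splits into two pillars: (a) a local-in-time strong solution obtained via a maximal regularity / contraction mapping argument in the class $\E_{1,\tau'}$, and (b) global-in-time a priori estimates in $\rX_w$ that rule out finite-time blow-up and allow continuation up to the prescribed horizon $\tau$.

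For the local theory I would rewrite \eqref{eq: primitive + EBM simplified} abstractly as $\partial_t U + \mathcal{A} U = \mathcal{F}(U) + F$ on $\rX_0$, where $U = (v,T,\rho)$ and $\mathcal{A}$ is the linearization comprising the hydrostatic Stokes operator for $v$, the Laplacian for $T$ augmented by the trace identification $T|_{\Gamma_u} = \rho$, and the dynamic operator $-\DeltaH$ for $\rho$; the nonlinear part $\mathcal{F}$ collects the transport terms, the pressure contribution $\nablaH \theta$, and the locally Lipschitz reaction $R(\cdot,\rho)$. Since $\mathcal{A}$ with domain $\rX_1$ has maximal $\rL^2$-regularity by \cite{bib0010}, and since the bilinear transport terms together with the polynomial growth bound $|R(x,\rho)| \leq C(1+|\rho|^4)$ admit standard quadratic estimates on $\E_{1,\tau'}$, the Banach fixed-point theorem produces a unique maximal strong solution on some interval $[0,\tau^*)$ with $(v,T,\rho)\in \E_{1,\tau'}$ for every $\tau' < \tau^*$. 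The external force $F$ enters linearly and contributes only an additive $\|F\|_{\E_{0,\tau'}}$-term; no modification of the fixed-point step is required.

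To upgrade to a global solution I would derive bounds keeping $(v,T,\rho)$ uniformly in $\rX_w$ on $[0,\tau^*)$, which, by the standard continuation principle for maximal regularity problems, forces $\tau^* \geq \tau$. The basic $\rL^2$-energy inequality \eqref{Xeqn11-11}, obtained by testing the three equations with $(v,T,\rho)$, hinges on a crucial cancellation between the trace term $\int_G (\dz T)|_{\Gamma_u}\cdot \rho \,\d\sigma$ generated by integration by parts in the $T$-equation and the identical coupling term already present in the EBM; combined with the Stefan-Boltzmann dissipation $\|\rho\|_5^5$ (which absorbs the $Q(x)\beta(\rho)\rho$ contribution by Young's inequality) and Gronwall, this yields $(v,T,\rho) \in \rL^\infty(0,\tau;\rX_0) \cap \rL^2(0,\tau;\rX_w)$. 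For the $\rH^1$-upgrade of $v$, I would invoke the Cao-Titi barotropic/baroclinic splitting $v = \bar v + \tilde v$ used in \cite{bib0010}: $\bar v$ obeys a forced two-dimensional Navier-Stokes-type equation on $G$ with data controlled by the previous step, and $\tilde v$ admits an anisotropic $\dz$-energy estimate. For the temperature-surface pair, testing the $T$-equation with $\partial_t T$ and the EBM with $\partial_t \rho$ produces interface contributions $\int_G (\dz T)|_{\Gamma_u}\,\partial_t \rho \,\d\sigma$ of opposite signs that cancel, yielding $\rL^\infty_t\rH^1_x$-bounds on both $T$ and $\rho$ together with $\rL^2_{t,x}$-bounds on their time derivatives. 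A final appeal to maximal $\rL^2$-regularity to the now-forced linear equation closes the full $\E_{1,\tau}$-norm.

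The principal obstacle is precisely the dynamic boundary condition $T|_{\Gamma_u} = \rho$: since the surface temperature is not prescribed but evolves, the standard $\rH^1$-estimate for $T$ via $-\Delta T$ produces an uncontrolled interface contribution involving $\partial_t T|_{\Gamma_u}$, and the analogous test of the EBM against $-\DeltaH\rho$ produces an incompatible one. The remedy is the $\partial_t T$-$\partial_t \rho$ pairing indicated above, which appears to be the only choice making the two interface terms cancel identically. Once this cancellation is installed, the remaining bounds are direct adaptations of those in \cite{bib0010} with the forces absorbed by Gronwall's inequality, and the continuation criterion yields the claimed global strong solution on $[0,\tau]$. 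Uniqueness follows from the standard energy method applied to the difference of two solutions in $\E_{1,\tau}$, in which the Lipschitz property of $R$ on bounded sets produces only a harmless Gronwall term.
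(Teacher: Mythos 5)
Your overall architecture coincides with the paper's: local existence by the maximal $\rL^2$-regularity theory of \cite{bib0010}, an $\rL^2$-energy bound (with the same cancellations of the transport terms and absorption of $Q\beta(\rho)\rho$ by the $\|\rho\|_5^5$-dissipation), the Cao--Titi-type $\rH^1$-estimate for $v$ following \cite{bib0025}, an $\rH^1$-estimate for the pair $(T,\rho)$, and a final maximal-regularity closure of the $\E_{1,\tau}$-norm; uniqueness as you indicate. The divergence is in the $(T,\rho)$-step, and this is where your sketch has a genuine gap. The paper tests with $-\Delta T$ and $-\DeltaH\rho$ and \emph{estimates} the resulting interface term $\int_G (\dz T)|_{\Gamma_u}\,\DeltaH\rho$ (by trace estimates and absorption into $\varepsilon\|\Delta T\|_2^2+\varepsilon\|\DeltaH\rho\|_2^2$, following Proposition 6.4 of \cite{bib0010}); your claim that this route leaves an ``uncontrolled'' boundary contribution, and that the $(\partial_t T,\partial_t\rho)$-pairing is the only workable choice, is therefore not accurate -- the cancellation you found is correct but not needed.

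The gap in your alternative is that testing with $(\partial_t T,\partial_t\rho)$ puts only $\|\partial_t T\|_2^2$, $\|\partial_t\rho\|_2^2$ and the time derivative of the $\rH^1$-seminorms on the left, with no $\|\Delta T\|_2^2$ or $\|\DeltaH\rho\|_2^2$ available for absorption, while the transport terms demand exactly such norms. For instance, $\int_\Omega w(v)\,\dz T\,\partial_t T$ requires a bound on $\|w(v)\,\dz T\|_2\lesssim \|\nablaH v\|_{\rL^2_z\rL^4_{xy}}\,\|\dz T\|_{\rL^2_z\rL^4_{xy}}\lesssim \|v\|_{\rH^1}^{1/2}\|v\|_{\rH^2}^{1/2}\,\|T\|_{\rH^1}^{1/2}\|T\|_{\rH^2}^{1/2}$, so an $\varepsilon\|T\|_{\rH^2}^2$-term appears that cannot be absorbed; trying to recover $\|T\|_{\rH^2}$ from elliptic regularity re-introduces $\|u\cdot\nabla T\|_2$ together with the $\rH^{3/2}(G)$-norm of the dynamic boundary datum $\rho$, which the $\partial_t\rho$-test likewise fails to control (no $\|\DeltaH\rho\|_2^2$ on the left); the term $\int_G \bar v\cdot\nablaH\rho\,\partial_t\rho$ suffers from the same problem. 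As written, the assertion that this pairing ``yields $\rL^\infty_t\rH^1_x$-bounds'' therefore does not go through: you would have to re-insert the $-\Delta T$, $-\DeltaH\rho$ tests (and then treat the interface term exactly as the paper does), or set up a genuinely more elaborate bootstrap, before the final maximal-regularity step can be invoked.
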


\begin{proof}
    The existence of unique, local strong solutions follows directly from the considerations in \cite[Section~5]{bib0010}, since we are considering a slight modification of the model investigated in the reference. Hence, we can focus on showing that the local solution can be extended globally. Let us denote by $J_{\mathrm{max}}=[0,a_{\mathrm{max}}(T_0,v_0))$ the maximal time interval of existence, with $a_{\mathrm{max}}>0$. Thus, it suffices to show that
    \begin{equation*}
        \lim\limits_{\tau \to a_\mathrm{max}} \| ( v,T ,\rho) \|_{\E_{1,\tau}}  < \infty
    \end{equation*}
    in order to exclude any possible blow-up of the solution $(v,T,\rho)$. So, assume that $(v,T,\rho) \in \E_{1,a_{\mathrm{max}}}$ is the maximal, unique, local solution of \eqref{eq: primitive + EBM simplified} subject to \eqref{eq:bc}. Multiplying \eqref{eq: primitive + EBM simplified}$_{1,3,5}$ by $v$, $T$, and $\rho$ respectively, and integrating in time and space, we obtain, via H\"older's, Young's and Gronwall's inequalities, that there exists a continuous function $B_1$, depending on $\| (v_0,T_0,\rho_0) \|_{\rX_0}$, $\| (f_1,f_2,f_3) \|_{\E_{0,\tau}}$ and $t$, such that
    \begin{equation}\label{eq: energy global}
        \norm{v(t)}_{2}^2 +\norm{T(t)}_{2}^2 + \norm{\rho(t)}_{2}^2+ 2 \int_0^t \norm{\nabla v(s)}_{2}^2 +\norm{\nabla T(s)}_{2}^2 + \norm{\nablaH\rho(s)}_{2}^2 +  \norm{\rho(s)}_{5}^5 \d s \leq B_1(t),
    \end{equation}
    for all $t \in [0,a_\mathrm{max}]$. The energy inequality \eqref{eq: energy global} is analogous to the one obtained in \cite[Lemma~6.3]{bib0010}, in fact, for the transport term $\bar{v}\cdot \nabla_H \rho$, we have, thanks to an integration by parts,
    \[
     \int_G \bar{v}\cdot \nabla_H\rho \cdot  \rho=0.
    \]
    From \eqref{eq: energy global}, we conclude that
    \begin{equation*}
        \| \nablaH \theta \|_{\rL^2(0,a_\mathrm{max};\rL^2(\Omega))}  =   \|\nablaH \int_0^z T(\cdot,\xi) \d \xi \|_{ \rL^2((0,a_\mathrm{max}) \times \Omega)} \leq B_1(a_\mathrm{max}).
    \end{equation*}
    Then, following the arguments in \cite[Section~6]{bib0025}, there exists a continuous function $B_2$, depending on $\| (v_0,T_0) \|_{\rX_w^{v,T}}$ (where we set $\rX_w^{v,T}:=\rH^1_\per(\Omega;\R^2) \cap \rL^2_{\sigmabar}(\Omega;\R^2) \times \rH^1_\per(\Omega)$), $B_1$ and $t$, such that
    \begin{equation}\label{eq: L2H2 for v}
        \| \nabla v \|^2_2 + \int_0^t \| \Delta v(s) \|^2_2 \d s \leq B_2(t)
    \end{equation}
     for all $t \in [0,a_\mathrm{max}]$. Next, multiplying \eqref{eq: primitive + EBM simplified}$_{3,5}$ by $-\Delta T$ and $-\DeltaH \rho$, respectively, integrating in space, and adding the resulting equations, we obtain
       \begin{equation*}
        \begin{aligned}
            &\dt \bigl ( \frac{1}{2}\| \nabla T \|^2_2   + \| \nablaH \rho \|^2_2+ \frac{1}{5} \| \rho \|^5_5 \bigr )  + \| \Delta T \|^2_2 + \| \DeltaH \rho \|^2_2 \\ &\le
            \int_\Omega  \bigl (u \cdot \nabla T +f_2 \bigr) \cdot \Delta T + \int_G \left (\vbar \cdot \nablaH \rho +Q(x) \beta(\rho) +f_3 \right ) \cdot \DeltaH \rho  + \int_G (\dz T)|_{\Gamma_u}\cdot  \DeltaH \rho .
        \end{aligned}
    \end{equation*}
    Here, some terms on the right-hand side have already been estimated in \cite[Proposition~6.4]{bib0010}, hence we limit ourselves to estimate the terms including the external forces $f_2$ and $f_3$ and the different transport term on the boundary. By applying H\"older's and Young's inequalities, there exists a constant $C>0$ such that
    \begin{equation}\label{eq: l2H2 for T}
    \begin{aligned}
        &\bigl | \int_\Omega f_2 \cdot \Delta T \bigr  | \leq \| f_2 \|_2^2 + \eps \| \Delta T \|^2_2, \\ &\bigl | \int_G f_3 \cdot \DeltaH\rho  \bigr | \leq C \| f_3 \|^2_2 + \varepsilon\|\DeltaH \rho \|^2_2 ,\\
        &\bigl | \int_G \bar{v}\cdot \nablaH \rho \cdot \DeltaH \rho \bigr | \le \norm{\bar{v}}_{\infty}\norm{\nablaH \rho}_2\norm{\DeltaH \rho}_2\\ & \le \norm{v}_{\rH^2(\Omega)}\norm{\nablaH \rho}_2\norm{\DeltaH \rho}_2 \le C\norm{v}_{\rH^2(\Omega)}^2\norm{\nablaH \rho}_2^2 +\varepsilon \norm{\DeltaH \rho}_2^2.
        \end{aligned}
    \end{equation}
    Therefore, absorbing the highest order norms into the left-hand side, integrating in time and applying Gronwall's inequality, we deduce that there exists a continuous function $B_3$, depending on $\norm{(v_0,T_0,\rho_0)}_{\rX_w}$, $B_1$, and $B_2$ such that
    \begin{equation*}
    \| \nabla T \|^2_{2} + \| \nablaH \rho \|^2_{2} + \int_0^t  \| \Delta T(s) \|^2_2 + \| \DeltaH  \rho(s)  \|^2_{2} \d s \leq B_3(t)
\end{equation*}
for all $t \in [0,a_\mathrm{max}]$. Finally, by maximal $\rL^2$-regularity, it follows that
\begin{equation*}
  \begin{aligned}
     &\| (v,T) \|_{\E_{1,a_{\mathrm{max}}}}  \\ &\leq
    C \bigl ( \| \nablaH \theta - u \cdot \nabla v \|_{\rL^2((0,a_{\mathrm{max}}) \times \Omega)} + \| u \cdot \nabla T \|_{\rL^2((0,a_{\mathrm{max}}) \times \Omega)} +\|  R(x,\rho)- \vbar \cdot \nablaH \rho \|_{\rL^2((0,a_{\mathrm{max}}) \times G)}  \\
    &\quad + \| f_1 \|_{\rL^2((0,a_{\mathrm{max}}) \times \Omega)} + \| f_2 \|_{\rL^2((0,a_{\mathrm{max}}) \times \Omega)} + \| f_3 \|_{\rL^2((0,a_{\mathrm{max}}) \times G)} \bigr ).
  \end{aligned}
\end{equation*}
We see that the bounds derived in \eqref{eq: L2H2 for v} and \eqref{eq: l2H2 for T} are sufficient to control the solution norm of $(v,T,\rho)$, provided that all external forces possess at least $\rL^2_t\,-\,\rL^2_x$-regularity in their respective domains.
\end{proof}

\section{Weak time periodic solutions}\label{Xsec4-4} \label{sec:WS}
In this section, we prove an existence result for weak $T^*$-periodic solutions, for arbitrary large data. The existence of weak time-periodic solutions to a diffusive energy balance model was discussed for example in the works of Diaz and Tello \cite{bib0013}, and by Badii and Diaz \cite{bib0003}. However, they do not couple the EBM with fluid equations.

The main result of this section is the following.
\begin{proposition}\label{prop: weak time periodic}
Let $T^*>0$, and let $(f_1,f_2,f_3)$ be as in \autoref{thm: main}. Then, there exists at least one weak $T^*$-periodic solution $(v,T,\rho)$ to the system \eqref{eq: primitive + EBM simplified} in the sense of \autoref{def: weak sol}.
\end{proposition}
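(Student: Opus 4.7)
The plan is to combine a Galerkin scheme with Brouwer's fixed-point theorem in order to build, at each level $n$, a $T^*$-periodic approximation whose $\rX_0$-norm is controlled independently of $n$, and then pass to the limit by compactness. This follows the overall philosophy of \cite{bib0017}, adapted to the presence of the dynamic boundary condition and the polynomial reaction on $\Gamma_u$.

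\textbf{Step 1: Galerkin scheme.} I would choose finite-dimensional subspaces $X_n \subset \rX_1$ spanned by eigenfunctions of the associated linear operators, namely a hydrostatic Stokes-type operator for $v$ (respecting $\divH \bar{\cdot}=0$, $(\dz\cdot)|_{\Gamma_u\cup\Gamma_b}=0$, and the horizontal periodicity), the mixed Laplacian for $T$ (Neumann on $\Gamma_b$, trace coupled to $\rho$ on $\Gamma_u$), and the periodic horizontal Laplacian on $G$ for $\rho$. Projecting \eqref{eq: primitive + EBM simplified} onto $X_n$ yields an ODE system whose unique local solution $(v_n,T_n,\rho_n)$ extends globally because of the a priori estimate derived next.

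\textbf{Step 2: Uniform a priori estimate on periodic orbits.} Testing the Galerkin equations with $(v_n,T_n,\rho_n)$, the nonlinear terms cancel: for $v_n$ this is the standard hydrostatic cancellation using $\divH \bar v_n=0$ and $w(v_n)|_{\Gamma_u\cup\Gamma_b}=0$; for $T_n$ integration by parts and $\div u_n=0$ give $\int_\Omega u_n\cdot\nabla T_n\,T_n = 0$ after accounting for the trace term against the $\rho_n$-equation via $T_n|_{\Gamma_u}=\rho_n$; and $\int_G \bar v_n\cdot\nablaH\rho_n\,\rho_n=0$ as noted by the authors. The reaction term $Q(x)\beta(\rho_n)\rho_n$ is absorbed into $\tfrac{1}{5}\|\rho_n\|_5^5$ by Young, and $\int_\Omega \nablaH\theta(T_n)\cdot v_n$ is handled via $\|\nablaH\theta(T_n)\|_2\lesssim\|T_n\|_2$ absorbed into $\|\nabla v_n\|_2^2$. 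Combining with the Poincaré--trace inequalities (the term $\|\rho_n\|_5^5$ controls $\|\rho_n\|_2^2$, and $\|\nabla T_n\|_2^2+\|\rho_n\|_2^2$ controls $\|T_n\|_2^2$ through the trace $T_n|_{\Gamma_u}=\rho_n$), one obtains
\begin{equation*}
\tfrac{d}{dt}E_n(t) + \mu\, E_n(t) \le c\bigl(1+\|f_1\|_2^2+\|f_2\|_2^2+\|f_3\|_2^2\bigr),
\end{equation*}
with $E_n=\tfrac12(\|v_n\|_2^2+\|T_n\|_2^2+\|\rho_n\|_2^2)$ and $\mu>0$. Integrating over $[0,T^*]$ yields $E_n(T^*)\le e^{-\mu T^*}E_n(0)+M$ with $M$ independent of $n$ and of the size of the forcing.

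\textbf{Step 3: Periodic orbit via Brouwer.} Define the Poincaré map $\Phi_n:X_n\to X_n$, $\Phi_n(U_0)=(v_n,T_n,\rho_n)(T^*)$; it is continuous by standard dependence of ODEs on initial data. Step~2 shows that whenever $\|U_0\|_{\rX_0}^2=R^2$ with $R^2\ge M/(1-e^{-\mu T^*})$, one has $\|\Phi_n(U_0)\|_{\rX_0}^2\le R^2$. Brouwer's fixed-point theorem then provides $U_n^0\in \overline{B_R}$ with $\Phi_n(U_n^0)=U_n^0$, so that the Galerkin solution starting at $U_n^0$ is $T^*$-periodic and satisfies
\begin{equation*}
\sup_{t\in[0,T^*]}\|(v_n,T_n,\rho_n)(t)\|_{\rX_0}^2 + \int_0^{T^*}\!\!\bigl(\|\nabla v_n\|_2^2+\|\nabla T_n\|_2^2+\|\nablaH\rho_n\|_2^2+\|\rho_n\|_5^5\bigr)\d s\le C,
\end{equation*}
with $C$ independent of $n$.

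\textbf{Step 4: Passage to the limit.} The uniform bounds place $(v_n,T_n,\rho_n)$ in a bounded set of $\E_{w,T^*}$, and using the Galerkin equations one controls $\dt(v_n,T_n,\rho_n)$ in a suitable dual space. Aubin--Lions compactness then produces a subsequence converging strongly in $\rL^2(0,T^*;\rX_0)$ (with $\rho_n$ strongly in $\rL^4((0,T^*)\times G)$ by interpolating the $L^5$ bound), which is enough to pass to the limit in all nonlinear terms, including the polynomial reaction $|\rho|^3\rho$ and the boundary transport $\bar v\cdot\nablaH\rho$. Weak lower semicontinuity of norms yields the energy inequality of Definition~\ref{def: weak sol}(c); the $T^*$-periodicity of the limit follows because $(v_n,T_n,\rho_n)$ is weakly continuous into $\rX_0$ and $(v_n(0),T_n(0),\rho_n(0))=(v_n(T^*),T_n(T^*),\rho_n(T^*))$ for every $n$. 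The hardest step is Step~2: one must extract, from the interplay of the nonlinear dissipation $\|\rho\|_5^5$, the trace coupling $T|_{\Gamma_u}=\rho$, and the Poincaré inequality for $v$ (after accounting for its horizontal mean through the $\nablaH p_s$ term), a genuine exponential-decay estimate for $E_n$ independent both of $n$ and of the size of $(f_1,f_2,f_3)$; this is precisely what allows the argument to accommodate arbitrarily large forces.
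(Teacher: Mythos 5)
Your overall architecture --- Galerkin approximation in $\rX_1$, cancellation of the transport terms, Brouwer's fixed point applied to the period map, and compactness plus weak lower semicontinuity to recover the weak formulation and the energy inequality --- is exactly the strategy of the paper, and your Steps 1, 3 and 4 essentially reproduce it (the paper uses Friedrichs' lemma rather than Aubin--Lions, and it builds $\rho^m$ as the trace $T^m|_{\Gamma_u}$ of an expansion in eigenfunctions of the Laplacian with dynamic boundary conditions rather than expanding $\rho$ in an independent basis; with genuinely independent bases the discrete identity $\rho_n=T_n|_{\Gamma_u}$, which you use to cancel the vertical heat-flux coupling, is lost, so you should adopt the paper's coupled ansatz).

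The genuine gap is Step 2, which you yourself single out as the crux. The dissipative estimate $\tfrac{d}{dt}E_n+\mu E_n\le c\bigl(1+\|f_1\|_2^2+\|f_2\|_2^2+\|f_3\|_2^2\bigr)$ is not available for the velocity component in this setting: the lateral conditions are purely periodic and the vertical ones are Neumann, so every nonzero constant horizontal field lies in $\rH^1_\per(\Omega;\R^2)\cap\rL^2_{\sigmabar}(\Omega;\R^2)$, satisfies all boundary conditions, and is invisible to $\|\nabla v\|_2^2$. Testing the $v$-equation with such a constant gives $\tfrac{d}{dt}\int_\Omega v=\int_\Omega f_1$, because $\nablaH p_s$ and $\nablaH\theta$ have vanishing horizontal mean by periodicity --- so the $\nablaH p_s$ term you invoke to ``account for the horizontal mean'' provides no damping whatsoever, and for a force with nonzero spatial mean the claimed inequality is simply false (the mean of $v_n$ grows linearly while decay would force boundedness). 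In addition, the buoyancy coupling $(\nablaH\theta,v)_2$ costs $\|\nabla T\|_2\|v\|_2$ (or $\|T\|_2\|\nabla v\|_2$ after integrating by parts), and absorbing it leaves a lower-order term whose constant need not be small, so even modulo the mean mode a negative multiple of the full energy does not drop out; your assertion that $M$ is independent of the size of the forcing is also incorrect, though harmless. Note that the paper does not attempt a decay estimate at all: it derives only $\tfrac{d}{dt}y^m\le y^m+\Phi$ and sets up the Brouwer self-map from the resulting one-period Gronwall bound $y^m(T^*)\le e^{T^*}\bigl(y^m(0)+\int_0^{T^*}\Phi\bigr)$. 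So the step you present as the engine of the argument is both unproven and not the route the paper takes; to repair your version you would have to treat the undamped mean mode separately (or impose a compatibility condition on $f_1$), or else organize the absorbing-ball argument without requiring exponential decay, as the paper does.
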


\begin{proof}
    Let $\{(\psi_k, \tilde{\psi}_k)\}_{k\in \mathbb{N}}\subset \rX_1$ be an orthonormal basis of $\rX_{0}$ that is dense both in $\rX_w$ and in $\rX_1$. For instance, we may choose $\psi_k$
 to be eigenfunctions of the \emph{Neumann-Laplacian} and $\tilde{\psi}_k$ to be eigenfunctions of the Laplacian subject to dynamic boundary conditions, see \cite[Section 5]{bib0010}.
    \par

    We construct approximate solutions by setting
\[
v^m(t,x):=\sum_{k=1}^m c_{k}^m(t)\psi_k(x),\quad T^m(t,x):=\sum_{k=1}^m c_{k}^m(t)\tilde{\psi}_k(x),
\]
and, for the surface temperature,
\[
\rho^m(t,x):=T^m|_{\Gamma_u}(t,x_\H)=\sum_{k=1}^m c_{k}^m(t)\tilde{\psi}_{k,\Gamma}(x),
\]
where we define $\tilde{\psi}_{k,\Gamma}:=\tilde{\psi}_k|_{\Gamma_u}$.
The coefficients $\{c_{k}^m(t)\}_{k=1}^m$ satisfy the ordinary differential equations
    \[
    \begin{aligned}
    &(v^{m}_t, \psi_k)+(\nabla v^m, \nabla\psi_k)-(v^m \cdot \nablaH \psi_k+ w^m(v^m)\partial_z \psi_k, v^m)-(\nablaH \int_0^z T^m(\cdot,\xi)\,\d \xi, \psi_k)+\\
    &+\int_{\Omega}T^{m}_t\tilde{\psi}_k + \int_{\Omega}\nabla T^m\cdot\nabla \tilde{\psi}_k-\int_{\Omega}u^m \cdot \nabla\tilde{\psi}_k\cdot T^m+ \int_{G}\rho^{m}_t \tilde{\psi}_{k,\Gamma}-\int_G \bar{v} \cdot \nablaH \tilde{\psi}_{k,\Gamma}\cdot \rho^m+\\ &+\int_G\nablaH \rho^m\cdot\nablaH \tilde{\psi}_{k,\Gamma}-\int_G R(x,\rho^m) \tilde{\psi}_{k,\Gamma}\,dx_\H=(f_1,\psi_k)+\int_{\Omega}f_2\tilde{\psi}_k+ \int_G f_3 \tilde{\psi}_{k,\Gamma}, \quad k=1,\dots, m.
    \end{aligned}
    \]
    Multiplying the previous equation by $c_{k}^m(t)$ and summing over $k$, we get
    \begin{equation} \label{eq:sys}
    \begin{aligned}
    &(v^{m}_t, v^m)+(\nabla v^m, \nabla v^m)-(\nablaH \int_0^z T^m(\cdot,\xi) \d\xi, v^m)+\int_{\Omega} T^{m}_tT^m + \int_{\Omega}\nabla T^m\cdot \nabla T^m+ \\
    &+\int_G\rho^{m}_t \rho^m+\int_G\nablaH \rho^m\cdot\nablaH \rho^m-\int_G R(x,\rho^m) \rho^m=(f_1,v^m)+\int_{\Omega} f_2 T^m + \int_G f_3 \rho^m,
    \end{aligned}
    \end{equation}
    thanks to
    \[
    (v^m \cdot \nablaH v^m+ w^m(v^m)\partial_z v^m, v^m)=\int_{G}\bar{v} \cdot \nablaH \rho^m\cdot \rho^m =\int_{\Omega} u^m \cdot \nabla T^m\cdot T^m=0.
    \]
    Using the estimate
    \[
    -(\nablaH \int_0^z T^m(\cdot,\xi)\d\xi, v^m) \ge  -\frac{1}{2}\norm{\nabla T^m}_{2}^2- \frac{1}{2}\norm{v^m}_{2}^2,
    \]
    we deduce, via H\"older's and Young's inequalities, that
    \[
    \begin{aligned}
    &\frac{d}{dt}(\norm{v^m}_{2}^2+\norm{T^m}_{2}^2+\norm{\rho^m}_{2}^2)+2(\norm{\nabla v^m}_{2}^2+\norm{\nabla T^m}_{2}^2+\norm{\nablaH \rho^m}_{2}^2)\\
    &-\norm{\nabla T^m}_{2}^2 - \norm{v^m}_{2}^2+2\norm{\rho^m}_{5}^5- 2\int_G Q(x)\beta(\rho^m)\rho^m\\
    &\le \norm{f_1}_{2}^2+\norm{v^m}_{2}^2+\norm{f_2}_{2}^2+\norm{T^m}_{2}^2+\norm{f_3}_{2}^2+\norm{\rho^m}_{2}^2.
    \end{aligned}
    \]
    Setting $y^m(t):=2\norm{v^m(t)}_{2}^2+\norm{T^m(t)}_{2}^2+2\norm{\rho^m(t)}_{2}^2$, we deduce that
    \[
     \frac{d}{dt}y^m \le y^m+ \Phi, \ \text{ with } \
    \Phi:=C^2+\norm{f_1}_{2}^2+\norm{f_2}_{2}^2+\norm{f_3}_{2}^2.
    \]
    Gronwall's lemma implies
    \begin{equation} \label{eq:Gr}
    y^m(t)\le \mathrm{e}^t \bigl (y^m(0)+\int_0^t\Phi(s) \d s \bigr).
    \end{equation}
    We hence obtain $c^m(t)=\norm{v^m(t)}_2=\norm{T^m(t)}_2=\norm{\rho^m(t)}_{2}$. And choosing $R>0$ sufficiently large,
    we obtain
    \[
    y^m(T^*)=5\abs{c^m(T^*)}^2 \le e^{T^*}\bigl (5\abs{c^m(0)}^2+\int_{0}^{T^*}\Phi(s)\d s\bigr )\le R^2
    \]
    provided that $y^m(0)=5 \cdot \abs{c^m(0)}^2\le R^2$. Hence, the map
    \[
    S:c^m(0)\in \mathbb{R}^m \mapsto c^m(T^*) \in \mathbb{R}^m
    \]
    is a self-map on the ball $\rB^{m}_{\frac{R}{\sqrt{5}}}:=\{x \in \mathbb{R}^m \ : \ \abs{x-0} \le \frac{R}{\sqrt{5}}\}$ and it is continuous. Brouwer's fixed point theorem yields the existence of $c^m(0) \in \rB_{\frac{R}{\sqrt{5}}}^m$ such that $c^m(0)=c^m(T^*)$. Hence, there exist $v^m(0)$, $T^m(0)$, $\rho^m(0)$ such that
    \[
    v^m(0)=v^m(T^*), \quad T^m(0)=T^m(T^*), \quad \rho^m(0)=\rho^m(T^*).
    \]
    Thus, $(v^m(t),T^m(t),\rho^m(t))$ can be extended to $[0,\infty)$ as periodic functions with period $T^*$. Following the arguments in \cite{bib0016,bib0038}, we get that
    \[
    (v^m,\psi), \quad  \int_{\Omega}T^m\tilde{\psi}, \quad \int_G\rho^m\tilde{\psi}_{\Gamma},
    \]
    for $(\psi,\tilde{\psi},\tilde{\psi}_{\Gamma})\in \rX_0$, are uniformly bounded and uniformly continuous.
    Hence, they admit converging subsequences to $g(t)$, $h(t)$, $k(t)$ in $\rC([0,\tau))$, and  converging weakly to a limit $(v,T,\rho)$ in $\E_{0,\tau}$. On the other hand, using
\[
    \frac{d}{dt} {y}^m +2(\norm{\nabla v^m}_{2}^2+\norm{\nabla T^m}_{2}^2+\norm{\nablaH \rho^m}_{2}^2)\le y^m+ \Phi,
    \]
together with \eqref{eq:Gr} and the fact that $y^m(0)\le R^2$, these subsequences have a weak limit $(\hat{v}, \hat{T}, \hat{\rho})$ in $\rL^2(0,\tau;\rX_1)$. Friedrichs lemma then ensures that
$(v^{m}, T^{m}, \rho^{m})$ is a Cauchy sequence in $\E_{0,\tau}$ and thus it converges strongly to $(\hat{v}, \hat{T}, \hat{\rho})$ in that space. It is now standard to show that $(v,T,\rho)$ satisfies the weak formulation in the sense of \autoref{def: weak sol}. Finally, concerning the energy inequality, noticing that
\[
    \begin{aligned}
        & (v^m, T^m, \rho^m) \to (v,T,\rho) \,\, \text{weakly in} \,\, \rL^2(0,\tau;\rX_1), \\
        & v^m(t) \to v(t), \,\, T^m(t)\to T(t), \,\, \rho^m(t) \to \rho(t),\,\, \text{weakly in}\,\, \rX_0, \,\, \text{ for all} \,\, t\in[0,\tau), \\
        & (v^m, T^m, \rho^m) \to (v,T,\rho) \,\, \text{strongly in} \,\, \E_{0,\tau},
    \end{aligned}
    \]
we can pass to the limit $m\to+\infty$ in \eqref{eq:sys} (after integrating in time) to obtain the desired energy inequality. This completes the proof.
\end{proof}

\section{Weak-strong uniqueness}\label{Xsec5-5} \label{sec:w-s-u}
\noindent
In this section, we prove a weak-strong uniqueness result for our model.
\begin{proof}[Proof of Theorem \ref{thm: main}]
Let $(v_1,T_1,\rho_1)$ be a weak $T^*$-periodic solution to \eqref{eq: primitive + EBM simplified} subject to \eqref{eq:bc}. Then, by \autoref{def: weak sol} there exists $t_0>0$ such that $(v_1,T_1)(t_0) \in\rX_w$. Let $(v_2,T_2,\rho_2)$ be the strong solution to the system \eqref{eq: primitive + EBM} given by \autoref{prop: global with force} with initial condition $(v_1,T_1)(t_0)$. Our goal is to prove that
\begin{equation}
(v_1,T_1,\rho_1) = (v_2,T_2,\rho_2) \quad \text{ a.\ e. in }  (t_0,t) \times  \Omega \times \Omega \times G, \quad \text{ for all } t> t_0.
\label{eq: thesis weak strong uniqueness}
\end{equation}
By estimate \eqref{Xeqn11-11}, it holds that
\begin{equation}
\begin{split}
    &\norm{v_1}_2^2 + \norm{T_1}_2^2 + \norm{\rho_1}_2^2 + 2 \int_{t_0}^t \left \lbrace\norm{\nabla v_1(s)}_2^2 + \norm{\nabla T_1(s)}_2^2 + \norm{\nablaH \rho_1(s)}_2^2 + \norm{\rho_1(s)}_5^5 \right \rbrace \d s \le \\
      &\norm{v_1(t_0)}_2^2 + \norm{T_1(t_0)}_2^2 + \norm{\rho_1(t_0)}_2^2 + 2 \int_{t_0}^t \bigl \{ (f_1 + \nablaH \theta_1,v_1)_2+ \int_{\Omega} f_2 \cdot T_1 + \int_{G}\bigl (f_3 + Q(x) \cdot  \beta(\rho_1)\bigr )\cdot \rho_1 \bigr \} \d s
\end{split}
    \label{eq: 5.1 bis}
\end{equation}
for each $t \geq t_0$, where $\theta_1 (\cdot, z) = \int_0^z T_1(\cdot, \xi) \d \xi$. Further, via \eqref{eq: (b) definition weak solution periodic} we infer that for any $\mathcal{T}>0$ and $t \in (t_0, \mathcal{T}]$ the following holds:
\begin{equation}
\begin{split}
    &\int_{t_0}^t \left \lbrace \left(v_1, \partial_t \phi\right)_2 - \left(\nabla v_1, \nabla \phi\right)_2 - \left(v_1 \cdot \nablaH v_1 + w(v_1) \partial_z v_1, \phi\right)_2  + (\nablaH \theta_1 , v_1)_2 \right \rbrace \d s  \\
    &\quad+\int_{t_0}^t  \left \lbrace  \int_{\Omega}T_1 \cdot  \partial_t \tilde{\phi} -  \nabla T_1 \cdot  \nabla \tilde{\phi} +  u_1 \cdot \nabla \tilde{\phi} \cdot  T_1 \right \rbrace \d s\\
    &\quad+ \int_{t_0}^t \left \lbrace \int_{G}\phi \cdot \partial_t \psi - \nablaH \rho_1 \cdot  \nablaH \psi+ \bar{v}_1 \cdot \nablaH \psi  \cdot \rho_1 - R(x ,\rho_1 ) \cdot  \psi \right \rbrace \d s \\
    &= (v_1(t), \phi(t))_2 - (v_1(t_0), \phi(t_0))_2 +\int_\Omega T_1(t) \cdot \tilde{\phi}(t) - \int_\Omega T_1(t_0)\cdot  \tilde{\phi}(t_0) + \int_G \rho_1(t)\cdot \psi(t)- \int_G\rho_1(t_0)\cdot \psi(t_0)\\
    &\quad- \int_{t_0}^t \left \lbrace (f_1, \phi)_2 + \int_{\Omega}f_2 \cdot  \tilde{\phi} + \int_{G}f_3 \cdot  \psi  \right\rbrace \d s,
\end{split}
\label{eq: 5.2 bis}
\end{equation}
for all $( \phi, \tilde{\phi}) \in \rC^1([t_0,\mathcal{T}]; \rX_w) \cap \rL^2([t_0,\mathcal{T}]; \rX_1)$, where $\psi = \tilde{\varphi}|_{\Gamma_u}$. Moreover, we have used the identity
\begin{equation}
(v_1 \cdot \nablaH \phi + w(v_1) \partial_z \phi, v_1)_2 = - (v_1 \cdot \nablaH v_1 + w(v_1) \partial_z v_1, \phi)_2,
    \label{eq: 5.3 bis}
\end{equation}
for $v_1, \phi$ as above. Considering now the strong solution $(v_2, T_2,\rho_2)$ with initial condition $(v_2, T_2,\rho_2)(t_0) = (v_1, T_1,\rho_1)(t_0) $, we can deduce an identity analogous to \eqref{eq: 5.2 bis}, i.\ e.,
\begin{equation}
\begin{aligned}
    &\int_{t_0}^t \left \lbrace \left(v_2, \partial_t \phi\right)_2 - (\nabla v_2, \nabla \phi)_2 + (v_2 \cdot \nablaH \phi + w(v_2) \partial_z \phi, v_2)_2  + (\nablaH \theta_2 , v_2)_2 \right \rbrace \d s  \\
    &\quad +\int_{t_0}^t  \left \lbrace \int_\Omega T_2 \cdot \partial_t \tilde{\phi} -   \nabla T_2 \cdot \nabla \tilde{\phi} +  u_2 \cdot \nabla \tilde{\phi}\cdot  T_2 \right \rbrace\\
    &\quad+ \int_{t_0}^t \left \lbrace \int_G\phi\cdot \partial_t \psi - \nablaH \rho_2 \cdot \nablaH \psi+  \bar{v}_2 \cdot \nablaH \psi \cdot \rho_2 - R(x ,\rho_2 ) \cdot \psi \right \rbrace \d s \\
    &= (v_2(t), \phi(t))_2 - (v_2(t_0), \phi(t_0))_2 +\int_\Omega T_2(t) \cdot \tilde{\phi}(t) - \int_\Omega T_2(t_0) \cdot \tilde{\phi}(t_0) + \int_G \rho_2(t)\cdot \psi(t) -\int_G \rho_2(t_0) \cdot \psi(t_0)\\
    &\quad - \int_{t_0}^t  \lbrace  (f_1, \phi)_2 + \int_\Omega f_2 \cdot \tilde{\phi} + \int_G f_3 \cdot \psi \rbrace \d s,
\end{aligned}
    \label{eq: 5.4 bis}
\end{equation}
for all $t \in (t_0, \mathcal{T}]$, and test functions $(\phi, \tilde{\phi})$. Here, $\theta_2 (\cdot, z) = \int_0^z T_2(\cdot, \xi) \d \xi$. Further, the following energy equality holds true
\begin{equation}
\begin{split}
    &\norm{v_2(t)}_2^2 + \norm{T_2(t)}_2^2 + \norm{\rho_2(t)}_2^2 + 2 \int_{t_0}^t \left \lbrace\norm{\nabla v_2(s)}_2^2 + \norm{\nabla T_2(s)}_2^2 + \norm{\nablaH \rho_2(s)}_2^2 + \norm{\rho_2(s)}_5^5 \right \rbrace \d s=\\
     &\norm{v_2(t_0)}_2^2 + \norm{T_2(t_0)}_2^2 + \norm{\rho_2(t_0)}_2^2 + 2 \int_{t_0}^t \left \lbrace (f_1 + \nablaH \theta_2,v_2)_2+ \int_\Omega f_2\cdot T_2 + \int_G (f_3 + Q(x) \beta(\rho_2)) \cdot \psi \right \rbrace \d s,
\end{split}
    \label{eq: 5.5 bis}
\end{equation}
for all $t \geq t_0.$

For $i =1,2,$ we consider the mollifications
\[
v_{i,h}(t) := \int_0^{\mathcal{T}} j_h(t-s) \cdot  v_i (s) \d s, \quad T_{i,h}:= \int_0^{\mathcal{T}} j_h(t-s) \cdot T_i(s) \d s, \quad \rho_{i,h}(t) := \int_0^{\mathcal{T}} j_h(t-s) \cdot  \rho_i (s) \d s,
\]
where $j_h \in \rC^\infty_c (-h,h)$, for $0 < h <\mathcal{T}$, is an even, positive mollifier with $\norm{j_h}_1 = 1.$ Further, by the regularity of the mollifications, it holds that
\begin{equation}
\begin{aligned}
    \int_{t_0}^t (v_1, \partial_t v_{2,h})_2 \d s & = -\int_{t_0}^t (v_{1,h},v_2)_2  \d s + (v_{2,h}(t),v_1(t))_2 -(v_{2,h}(t_0),v_1(t_0))_2 \\
    \int_{t_0}^t \int_{\Omega} T_1 \cdot  \partial_t T_{2,h}  \d s & =- \int_{t_0}^t \int_{\Omega}T_{1,h} \cdot T_2  \d s +  \int_{\Omega} T_{2,h}(t) \cdot T_1(t) -\int_{\Omega}T_{2,h}(t_0) \cdot T_1(t_0) \\
    \int_{t_0}^t \int_{G} \rho_1 \cdot  \partial_t \rho_{2,h} \d s & =- \int_{t_0}^t \int_G \rho_{1,h} \cdot \rho_2 \d s + \int_G\rho_{2,h}(t) \cdot \rho_1(t) -\int_G \rho_{2,h}(t_0) \cdot \rho_1(t_0)
\end{aligned}
    \label{eq: 5.7 bis}
\end{equation}
and
\begin{equation}
    \begin{aligned}
&\lim_{h \to 0} (v_2(t), v_{1,h}(t))_2 \,&=\,& \lim_{h \to 0} (v_{2,h}(t), v_1(t))_2 \,&=\,& (v_2(t), v_1(t))_2 \\
&\lim_{h \to 0} \int_{\Omega} T_2(t) \cdot T_{1,h}(t) \,&=\,& \lim_{h \to 0} \int_{\Omega} T_{2,h}(t) \cdot T_1(t) \,&=\,& \int_{\Omega} T_2(t) \cdot T_1(t) \\
&\lim_{h \to 0} \int_{G} \rho_2(t) \cdot \rho_{1,h}(t) \,&=\,& \lim_{h \to 0} \int_{G} \rho_{2,h}(t) \cdot \rho_1(t) \,&=\,& \int_{G} \rho_2(t) \cdot \rho_1(t)
\end{aligned}
    \label{eq: 5.8 bis}
\end{equation}
for all $t \geq t_0$.

Considering $(\phi, \tilde{\phi}) = (v_{2,h}, T_{2,h})$ in $\eqref{eq: 5.2 bis}$, we deduce
\begin{equation}
\begin{split}
    &\int_{t_0}^t \left \lbrace (v_1, \partial_t v_{2,h})_2 - (\nabla v_1, \nabla v_{2,h})_2 - (v_1 \cdot \nablaH v_1 + w(v_1) \partial_z v_1, v_{2,h})_2  + (\nablaH \theta_1 , v_1)_2 \right \rbrace \d s  \\
    &\quad+\int_{t_0}^t  \left \lbrace \int_{\Omega}T_1 \cdot  \partial_t T_{2,h} -  \nabla T_1 \cdot \nabla T_{2,h} +  u_1 \cdot \nabla T_{2,h} \cdot  T_1 \right \rbrace \d s\\
    &\quad + \int_{t_0}^t \left \lbrace \int_{G} v_{2,h} \cdot  \partial_t \psi_{2,h} - \nablaH \rho_1 \cdot \nablaH \psi_{2,h} + \bar{v} \cdot \nablaH \psi_{2,h} \cdot \rho_1 - R(x ,\rho_1 ) \cdot  \psi_{2,h}\right \rbrace \d s \\
    &= (v_1(t), v_{2,h}(t))_2 - (v_1(t_0), v_{2,h}(t_0))_2 + \int_{\Omega}T_1(t) \cdot  T_{2,h}(t) - \int_{\Omega} T_1(t_0) \cdot  T_{2,h}(t_0) \\
    &\quad + \int_G \rho_1(t) \cdot  \psi_{2,h}(t) - \int_{G} \rho_1(t_0)\cdot \psi_{2,h}(t_0)- \int_{t_0}^t \left \lbrace  (f_1, v_{2,h})_2 + \int_{\Omega}f_2 \cdot T_{2,h} + \int_{G}f_3 \cdot  \psi_{2,h} \right \rbrace \d s,
\end{split}
    \label{eq: 5.2 ter}
\end{equation}
where $\psi_{2,h} = T_{2,h}|_{\Gamma_u}$. We then consider $(\phi, \tilde{\phi}) = (v_{1,h}, T_{1,h})$ in \eqref{eq: 5.4 bis} and use \eqref{eq: 5.3 bis} to obtain
\begin{equation}
\begin{aligned}
    &\int_{t_0}^t \left \lbrace \left(v_2, \partial_t v_{1,h}\right)_2 - \left(\nabla v_2, \nabla v_{1,h}\right)_2 - \left(v_2 \cdot \nablaH v_2 + w(v_2) \partial_z v_2, v_{1,h}\right)_2  + \left(\nablaH \theta_2 , v_2\right)_2 \right \rbrace \d s  \\
    &\quad +\int_{t_0}^t  \left \lbrace \int_{\Omega} T_2 \cdot  \partial_t T_{1,h} - \nabla T_2 \cdot  \nabla T_{1,h} +  u_2 \cdot \nabla T_{1,h} \cdot  T_2 \right \rbrace\\
    &\quad + \int_{t_0}^t \left \lbrace \int_{G}\rho_2 \cdot  \partial_t \psi_{1,h} - \nablaH \rho_2 \cdot  \nablaH \psi_{1,h} + \bar{v}_2 \cdot \nablaH \psi_{1,h} \cdot  \rho_2 - R(x ,\rho_2) \cdot  \psi_{1,h} \right \rbrace \d s \\
    &= \left(v_2(t), v_{1,h}(t)\right)_2 - \left(v_1(t_0), v_{1,h}(t_0)\right)_2 + \int_{\Omega} T_2(t) \cdot  T_{1,h}(t) - \int_{\Omega} T_2(t_0) \cdot  T_{1,h}(t_0)\\
    &\quad + \int_G \rho_2(t) \cdot  \psi_{1,h}(t) - \int_G \rho_2(t_0) \cdot  \psi_{1,h}(t_0)
    - \int_{t_0}^t  \lbrace  (f_1, v_{1,h})_2 + (f_2, T_{1,h})_2 + (f_3, \psi_{1,h})_2 \rbrace \d s,
\end{aligned}
    \label{eq: 5.4 ter}
\end{equation}
with $\psi_{2,h} = T_{1,h}|_{\Gamma_u}.$ Denote from now on the difference between the weak and the strong solution by
\begin{equation*}
        (\sigma_v, \sigma_T, \sigma_\rho) = (v_1, T_1, \rho_1) - (v_2, T_2, \rho_2),
\end{equation*}
and similarly
\[
\sigma_u = u_1 - u_2, \quad \sigma_{\bar{v}} = \bar{v}_1 - \bar{v}_2.
\]
By summing \eqref{eq: 5.2 ter} and \eqref{eq: 5.4 ter}, and considering the limit in $h$, classical results on mollifications, \eqref{eq: 5.7 bis}, \eqref{eq: 5.8 bis} along with
\[
( \sigma_v \cdot \nablaH v_2 + w(\sigma_v) \cdot \partial_z v_2, v_2 )_2 = 0,
\]
we obtain
\begin{equation}
\begin{aligned}
    & -\int_{t_0}^t \left \lbrace 2 \cdot  ( \nabla v_1, \nabla v_2)_2 + ( \sigma_v \nablaH \sigma_v + w(\sigma_v)\cdot \partial_z \sigma_v,v_2)_2 \right \rbrace \d s
- \int_{t_0}^t \left \lbrace  \int_{\Omega} 2 \cdot\nabla T_1 \cdot  \nabla T_2 + \sigma_u \cdot \nabla \sigma_T \cdot  T_2 \right \rbrace \d s \\
& \quad -\int_{t_0}^t \left \lbrace \int_G 2 \cdot   \nablaH \rho_1 \cdot  \nablaH \rho_2 +\sigma_{\bar{v}} \cdot \nablaH \sigma_\rho \cdot \rho_2 + R(x, \rho_1) \cdot \rho_2  + R(x, \rho_2) \cdot \rho_1  \right \rbrace  \d s \\
= & (v_1(t), v_2(t))_2 - \norm{v_1(t_0)}_2^2 - \int_{t_0}^t \left \lbrace ( \nablaH \theta_1, v_1)_2 + (\nablaH \theta_2,v_2)_2 + (f_1,v_1 + v_2)_2 \right \rbrace \d s \\
& \quad + (T_1(t), T_2(t))_2 - \norm{T_1(t_0)}_2^2  - \int_{t_0}^t \int_{\Omega}f_2 \cdot ( T_1 + T_2) \d s \\
& \quad + (\rho_1(t), \rho_2(t))_2 - \norm{\rho_1(t_0)}_2^2  - \int_{t_0}^t \int_G f_3 \cdot ( \rho_1 + \rho_2) \d s.
\end{aligned}
    \label{eq: 5.10 bis}
\end{equation}
Adding twice \eqref{eq: 5.10 bis} to \eqref{eq: 5.1 bis} and \eqref{eq: 5.5 bis}, we arrive at
\begin{equation}
\begin{split}
    &\norm{\sigma_v(t)}_2^2 + \norm{\sigma_T(t)}_2^2 + \norm{\sigma_\rho (t)}_2^2 + 2 \cdot \int_{t_0}^t \left \lbrace\norm{\nabla \sigma_v}_2^2 + \norm{\nabla \sigma_T}_2^2 + \norm{\nablaH \sigma_\rho}_2^2\right \rbrace \d s \\
    & \leq   2 \cdot \int_{t_0}^t \left \lbrace (\sigma_v \cdot \nablaH \sigma_v + w(\sigma_v) \cdot \partial_z \sigma_v, v_2)_2 + \int_{\Omega}\sigma_u \cdot \nabla \sigma_T \cdot  T_2 \right \rbrace \d s \\
    & \quad + 2 \cdot  \int_{t_0}^t \left \lbrace \int_{G}  \sigma_{\bar{v}}\cdot \nablaH \sigma_{\rho} \cdot  \rho_2 +R(x, \rho_1) \cdot \rho_2 + R(x, \rho_2) \cdot  \rho_1   \right \rbrace \d s.
\end{split}
    \label{eq: Inequality for grownall lemma weak strong uniqueness}
\end{equation}
Using \autoref{lem: estimate terms for weak strong uniqueness}, we bound the terms on the right hand side of \eqref{eq: Inequality for grownall lemma weak strong uniqueness}, obtaining
\begin{equation*}
    \begin{split}
    &\norm{\sigma_v(t)}_2^2 + \norm{\sigma_T(t)}_2^2 + \norm{\sigma_\rho (t)}_2^2 + 2 \cdot \int_{t_0}^t \left \lbrace\norm{\nabla \sigma_v}_2^2 + \norm{\nabla \sigma_T}_2^2 + \norm{\nablaH \sigma_\rho}_2^2\right \rbrace \d s \\
         &\leq  \int_{t_0}^t  \left \lbrace C(\varepsilon) g(s) \norm{\sigma_v}_2^2 + \varepsilon \left( \norm{\sigma_v}_{\rH^1}^2+ \norm{\sigma_T}_{\rH^1}^2 + \norm{\sigma_\rho}_{\rH^1}^2\right) + C\cdot \left( \norm{\rho_1}_5^5 + \norm{\rho_2}^5_5 \right) \right \rbrace \d s,
    \end{split}
\end{equation*}
where $\varepsilon>0$ is arbitrarly small, $C(\varepsilon)$ is positive and bounded, $C>0$ is a constant independent of $\varepsilon$, and $g \in \rL^1(t, t_0)$ for all $t > t_0$. We then conclude applying Gronwall's inequality, which yields \eqref{eq: thesis weak strong uniqueness}.
\end{proof}
It remains to obtain the bounds for the terms on the right-hand side of \eqref{eq: Inequality for grownall lemma weak strong uniqueness}, which are key to concluding the Gronwall argument for uniqueness.

\begin{lemma}\label{lem: estimate terms for weak strong uniqueness}
Let $(v_1, T_1, \rho_1)$ and $(v_2,T_2,\rho_2)$ be, respectively, a weak and a strong solution of \eqref{eq: primitive + EBM simplified} such that $(v_i, T_i)(t_0) = (v_1,T_1)(t_0) \in \rX_0$ for $i = 1,2.$ Then, for any $t>t_0$, it holds true that
\begin{align}
    \Bigg|\int_{t_0}^t \int_{\Omega} \sigma_v \cdot \nablaH T_2 \cdot \sigma_T  \d s  \Bigg| &\leq C(\eps) \int_{t_0}^t g_1(s) \norm{\sigma_v}_2^2  + \eps \int_{t_0}^t \left \lbrace \norm{\sigma_v}_{\rH^1}^2 + \norm{\sigma_T}_{\rH^1}^2 \right \rbrace \d s \tag{i},\\
    \Bigg|\int_{t_0}^t \int_{\Omega} w(\sigma_v) \cdot \partial_z T_2 \cdot \sigma_T  \d s  \Bigg| &\leq C(\eps) \int_{t_0}^t g_2(s) \norm{\sigma_T}_2^2  + \eps \int_{t_0}^t \left \lbrace \norm{w(\sigma_v)}_{\rH^1}^2 + \norm{\sigma_T}_{\rH^1}^2 \right \rbrace \d s \tag{ii},\\
    \Bigg|\int_{t_0}^t \int_{G} \sigma_{\vbar} \cdot \nablaH \rho_2 \cdot \sigma_\rho \d s  \Bigg| &\leq C(\eps) \int_{t_0}^t g_3(s) \norm{\sigma_v}_2^2  + \eps \int_{t_0}^t \left \lbrace \norm{\sigma_v}_{\rH^1}^2 + \norm{\sigma_\rho}_{\rH^1}^2 \right \rbrace \d s \tag{iii},\\
    \Bigg|\int_{t_0}^t \int_{G} R(x, \rho_1)\cdot \rho_2 + R(x, \rho_2) \cdot \rho_1  \d s  \Bigg| &\leq C \left( \norm{\rho_1}^5_{\rL^5((t_0,t)\times G)}+ \norm{\rho_2}^5_{\rL^5((t_0,t)\times G)} \right), \tag{iv}
\end{align}
where $g_i$ is non negative, $g_i \in \rL^1(t_0,t)$ for $i = 1,2,3$, $C(\eps)$ is positive, and $C>0$ is independent of $\eps$.
\end{lemma}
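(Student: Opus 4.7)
The plan is to prove (i)-(iv) by a common template: H\"older decomposition, Gagliardo-Nirenberg interpolation between $\rL^2$ and $\rH^1$, and Young's inequality to separate the $\eps$-absorbable $\rH^1$-part from a Gronwall-friendly $C(\eps) g_i(s)\|\sigma\|_2^2$ remainder. The time weights $g_i$ will be fourth powers of norms of $(v_2, T_2, \rho_2)$; since by \autoref{prop: global with force} the strong solution lies in $\rC([0,\tau]; \rX_w) \cap \rL^2(0,\tau; \rX_1) \hookrightarrow \rL^\infty(0,\tau; \rX_w)$, these weights will automatically belong to $\rL^\infty(t_0, t) \subset \rL^1(t_0, t)$ for every $t>t_0$.

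For (i), I would apply H\"older with exponents $(4, 2, 4)$ to obtain $\|\sigma_v\|_4 \|\nablaH T_2\|_2 \|\sigma_T\|_4$, and then invoke the 3D Gagliardo-Nirenberg inequality $\|f\|_4 \leq C \|f\|_2^{1/4}\|f\|_{\rH^1}^{3/4}$ on both $\sigma_v$ and $\sigma_T$. A single Young step with exponents $(8/3, 8/3, 4)$ isolates the two $\rH^1$-factors, to be absorbed into $\eps(\|\sigma_v\|_{\rH^1}^2 + \|\sigma_T\|_{\rH^1}^2)$, while the residual product $\|\sigma_v\|_2 \|\sigma_T\|_2 \|\nablaH T_2\|_2^4$ is dominated, via $2ab \leq a^2 + b^2$, by the claimed form with $g_1(s) := C\|\nablaH T_2(s)\|_2^4$. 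Statement (ii) follows an identical path once $\sigma_v$ is replaced by $w(\sigma_v)$ inside the H\"older step; the key ingredient is the anisotropic bound $\|w(\sigma_v)\|_{\rL^\infty_z \rL^2_{xy}} \leq \|\nablaH \sigma_v\|_2$, coming from the identity $w(\sigma_v)(z) = -\int_0^z \divH \sigma_v$ together with Cauchy-Schwarz in $\xi$, and the time weight becomes $g_2(s) := C\|\partial_z T_2(s)\|_2^4$.

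For (iii), the argument drops one dimension: Jensen gives $\|\sigma_{\bar v}\|_{\rL^p(G)} \leq \|\sigma_v\|_{\rL^p(\Omega)}$ for all $p \in [1, \infty]$, and the averaging inequality $\|\sigma_{\bar v}\|_{\rH^1(G)} \leq C \|\sigma_v\|_{\rH^1(\Omega)}$ transfers the $\rH^1$-control to $G$. Then H\"older $(4, 2, 4)$ on $G$ combined with the 2D Gagliardo-Nirenberg $\|f\|_{\rL^4(G)} \leq C \|f\|_{\rL^2(G)}^{1/2}\|f\|_{\rH^1(G)}^{1/2}$ and one Young step yields the bound with $g_3(s) := C\|\nablaH \rho_2(s)\|_{\rL^2(G)}^4$. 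For (iv), I would decompose $R(x, \rho) = Q(x)\beta(\rho) - |\rho|^3\rho$. Since $\beta$ is bounded, the co-albedo contribution $|Q(x)\beta(\rho_i)\rho_j|$ is pointwise bounded by $C(|\rho_i| + |\rho_j|)$, which via Young with exponents $(5, 5/4)$ is dominated by $C(|\rho_1|^5 + |\rho_2|^5 + 1)$; the additive constant is harmless in the subsequent Gronwall step. For the Stefan-Boltzmann contribution, Young's inequality $a^4 b \leq \tfrac{4}{5} a^5 + \tfrac{1}{5} b^5$ gives $|\rho_i|^3 \rho_i \rho_j \leq C(|\rho_i|^5 + |\rho_j|^5)$, establishing (iv).

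The main obstacle is (ii): the appearance of $\|w(\sigma_v)\|_{\rH^1}^2$ on the right-hand side is at first glance troublesome since no $\rH^2$-control of $\sigma_v$ is available, so one must exploit the anisotropy of $w(\sigma_v)$ and distribute the vertical integration so that, after Young, only $\|\nablaH \sigma_v\|_2 \leq \|\sigma_v\|_{\rH^1}$ (which \emph{is} controlled by the dissipation on the left-hand side of the Gronwall inequality in the proof of Theorem~\ref{thm: main}) ultimately enters the final estimate. Once this bookkeeping is in place, estimates (i), (iii), and (iv) are routine.
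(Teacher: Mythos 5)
Your treatment of (i), (iii) and (iv) is essentially the paper's own argument (H\"older, interpolation between $\rL^2$ and $\rH^1$, Young; boundedness of $Q$ and $\beta$ plus Young for (iv)); the only deviation is your choice of H\"older exponents $(4,2,4)$ instead of $(3,2,6)$ with $\rH^1\hookrightarrow\rL^6$, which leaves a weight multiplying $\norm{\sigma_v}_2^2+\norm{\sigma_T}_2^2$ (resp.\ $+\norm{\sigma_\rho}_2^2$ in (iii)) rather than $\norm{\sigma_v}_2^2$ alone. That proves a variant of the stated inequalities, but it is harmless for the Gronwall argument in the proof of \autoref{thm: main}.

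The genuine gap is in (ii). The paper settles this term by the anisotropic Cao--Titi estimate ((93) in \cite{bib0007}) followed by Young, and your claimed weight $g_2(s)=C\norm{\partial_z T_2(s)}_2^4$ cannot be achieved. Indeed, with your key ingredient $\norm{w(\sigma_v)}_{\rL^\infty_z\rL^2_{xy}}\le\norm{\nablaH\sigma_v}_2$ and with $\partial_z T_2$ measured only in $\rL^2(\Omega)$ (which is all a weight $\norm{\partial_z T_2}_2^4$ permits), H\"older in the horizontal variables forces $\sigma_T$ into $\rL^2_z\rL^\infty_{xy}$, which is \emph{not} controlled by $\norm{\sigma_T}_{\rH^1}$ since $\rH^1$ fails to embed into $\rL^\infty$ in two dimensions; no choice of exponents with the admissible quantities ($\norm{\sigma_v}_{\rH^1}$, $\norm{\sigma_T}_2$, $\norm{\sigma_T}_{\rH^1}$, $\norm{\partial_z T_2}_2$) closes, because $\tfrac12+\tfrac12+\tfrac1p\le 1$ has no finite solution. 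The correct completion of your anisotropic idea is: take the supremum of $w(\sigma_v)$ in $z$, apply H\"older $(2,4,4)$ horizontally, and use the two-dimensional Ladyzhenskaya inequality on \emph{both} $\partial_z T_2(\cdot,z)$ and $\sigma_T(\cdot,z)$, which gives
$C\norm{\nablaH\sigma_v}_2\,\norm{\partial_z T_2}_2^{1/2}\norm{\partial_z T_2}_{\rH^1}^{1/2}\,\norm{\sigma_T}_2^{1/2}\norm{\sigma_T}_{\rH^1}^{1/2}$;
Young then yields an absorbable $\eps(\norm{\sigma_v}_{\rH^1}^2+\norm{\sigma_T}_{\rH^1}^2)$ and the weight $g_2(s)=C\norm{\partial_z T_2}_2^2\norm{\nabla\partial_z T_2}_2^2$, whose integrability relies on $T_2\in\rL^2(t_0,t;\rH^2(\Omega))$ from \autoref{prop: global with force} -- not on any $\rL^\infty_t\rH^1$ bound, so your blanket justification that all weights are in $\rL^\infty(t_0,t)$ also fails for (ii). Finally, your alternative of running "the identical path with $\sigma_v$ replaced by $w(\sigma_v)$" would require $\norm{w(\sigma_v)}_{\rH^1}$, i.e.\ two derivatives of $\sigma_v$, which you yourself note is unavailable; the promised redistribution of the vertical integration is never actually carried out, and the incorrect weight $\norm{\partial_z T_2}_2^4$ shows the bookkeeping does not close as proposed.
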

\begin{proof}
    (i) Applying the triangular inequality and H\"older's inequality, we have
    \[     \Bigg|\int_{\Omega}\sigma_v \cdot \nablaH T_2 \cdot \sigma_T  \Bigg| \leq \norm{\sigma_v}_3 \cdot  \norm{\nablaH T_2}_2 \cdot \norm{\sigma_T}_6 \leq C \cdot \norm{\sigma_v}_3 \cdot  \norm{\nablaH T_2}_2 \cdot \norm{\sigma_T}_{\rH^1},    \]
where the last inequality follows from the embedding $\rH^1(\Omega) \hookrightarrow \rL^6(\Omega).$ Then, by interpolation inequality and Young's inequality, we obtain, for each $\varepsilon >0$,
\begin{equation*}
\begin{split}
\norm{\sigma_v}_3 \cdot  \norm{\nablaH T_2}_2 \cdot \norm{\sigma_T}_{\rH^1} &\leq \norm{\sigma_v}_2^{{1}/{2}} \cdot \norm{\sigma_v}_{\rH^1}^{{1}/{2}} \cdot \norm{T_2}_{\rH^1} \cdot \norm{\sigma_T}_{\rH^1} \\
& \leq  C(\varepsilon) \cdot \norm{\sigma_v}_2^2 \cdot \norm{T_2}_{\rH^1}^4  + \varepsilon \cdot  \left(\norm{\sigma_T}_{\rH^1}^2  + \norm{\sigma_v}_{\rH^1}^2\right).
\end{split}
\end{equation*}
The desired inequality then follows by integrating in time and setting $g_1(s):= \norm{T_2(s)}_{\rH^1}^4$, which is integrable since $(v_2,T_2)$ is a strong solution.

(ii)-(iii) Inequality (ii) is obtained by an application of Young's inequality after the use of (93) from \cite{bib0007}. Furthermore, inequality (iii) is derived with the same strategy as in (i).

(iv) Since there is a symmetry in the left-hand side of (iv), we only show how to bound $\abs{\int_{t_0}^t \int_GR(x_\H, \rho_1) \cdot \rho_2 \d s}$. By the definition of the radiation term $R$, we have
\[
\int_G \abs{R(x,\rho_1) \cdot \rho_2} \leq \int_G  \abs{Q(x) \cdot \beta(\rho_1) \cdot \rho_2}  + \int_{G} \abs{\rho_1^4 \cdot \rho_2}  \leq C \left( \norm{\rho_2}_1 + \norm{\rho_1}_5^5 + \norm{\rho_2}_5^5 \right),
\]
where we have used that $Q,\beta$ are bounded, and Young's inequality. The thesis follows by integrating in time and using the embedding $\rL^5(G) \hookrightarrow \rL^1(G).$
\end{proof}

\section*{Acknowledgements}
{Gianmarco Del Sarto, Matthias Hieber and Tarek Z\"{o}chling acknowledge the support from the DFG Research Unit  FOR~5528. Filippo Palma acknowledges the support from the GNFM research group of the \emph{Istituto Nazionale di Alta Matematica}.}

\end{document}